\documentclass[11pt]{amsart}

\usepackage{a4wide, amsmath, amsfonts, amssymb, mathrsfs, amsthm, breqn}
\usepackage[hyperfootnotes=false]{hyperref}
\usepackage[dvipsnames]{xcolor}
\usepackage{shuffle}
\usepackage{enumitem}
\allowdisplaybreaks[2]
\numberwithin{equation}{section}

\newtheorem{theorem}{Theorem}[section]
\newtheorem{lemma}[theorem]{Lemma}
\newtheorem{corollary}[theorem]{Corollary}
\newtheorem{remark}[theorem]{Remark}

\newtheorem{proposition}[theorem]{Proposition}

\newtheorem{example}[theorem]{Example}

\newcommand{\dd}{\,\mathrm{d}}
\renewcommand{\d}{\mathrm{d}}

\renewcommand{\epsilon}{\varepsilon}
\renewcommand{\phi}{\varphi}
\newcommand{\R}{\mathbb{R}}
\newcommand{\N}{\mathbb{N}}
\newcommand{\E}{\mathbb{E}}
\newcommand{\X}{\mathbb{X}}

\renewcommand{\P}{\mathbb{P}}

\newcommand{\bX}{\mathbf{X}}
\newcommand{\bY}{\mathbf{Y}}

\newcommand{\cA}{\mathcal{A}}
\newcommand{\cB}{\mathcal{B}}

\newcommand{\cF}{\mathcal{F}}
\newcommand{\cL}{\mathcal{L}}

\newcommand{\hX}{\widehat{X}}
\newcommand{\hY}{\widehat{Y}}
\newcommand{\hW}{\widehat{W}}
\newcommand{\hbX}{\widehat{\mathbf{X}}}

\newcommand{\hbW}{\widehat{\mathbf{W}}}

\newcommand{\bW}{\mathbf{W}}

\newcommand{\bbX}{\mathbb{X}}
\newcommand{\hbbX}{\widehat{\mathbb{X}}}
\newcommand{\hbbY}{\widehat{\mathbb{Y}}}
\newcommand{\hbbW}{\widehat{\mathbb{W}}}
\newcommand{\ver}[1]{{\vert\kern-0.25ex\vert\kern-0.25ex\vert #1 \vert\kern-0.25ex\vert\kern-0.25ex\vert}}

\DeclareMathOperator{\spn}{span}

\title[Global universality via discrete-time signatures]{Global universality via discrete-time signatures}

\author[Ceylan]{Mihriban Ceylan}
\address{Mihriban Ceylan, University of Mannheim, Germany}
\email{mihriban.ceylan@uni-mannheim.de}

\author[Pr{\"o}mel]{David J.~Pr{\"o}mel}
\address{David J. Pr{\"o}mel, University of Mannheim, Germany}
\email{proemel@uni-mannheim.de}

\date{\today}

\begin{document}

\begin{abstract}
  We establish global universal approximation theorems for non-anticipative and general path-dependent functionals on spaces of piecewise linear paths, stating that linear functionals of the corresponding signatures are dense with respect to $L^p$- and weighted norms. We verify that these approximation results are applicable to piecewise linear interpolations of a class of Gaussian processes, including  Brownian motion and fractional Brownian motion with Hurst parameter $H>1/4$. Moreover, we derive quantitative convergence rates for signatures of piecewise linear approximations of Gaussian processes towards their continuous-time counterparts. Consequently, we obtain $L^p$-approximation results for path-dependent functionals of Gaussian processes, as well as for random ordinary differential equations and stochastic differential equations driven by Brownian motion.
\end{abstract}

\maketitle

\noindent \textbf{Key words:} Gaussian process; non-anticipative functional; piecewise linear interpolation; signatures; stochastic differential equation; universal approximation theorem; weighted space.

\noindent \textbf{MSC 2010 Classification:} Primary: 60L10; Secondary: 60H10; 60J65; 91G99.


\section{Introduction}

Approximating functionals that depend in a complex way on the entire trajectory of a data stream is a fundamental challenge across a wide range of disciplines, including machine learning, quantitative finance, and the analysis of random dynamical systems. Many quantities arising in applications, such as optimal control strategies, option prices, and predictors learned from streamed data, depend on the full history of a signal. Developing tractable and expressive representations for such path-dependent objects is therefore of central importance, both from a theoretical and practical perspective.

Over the past decade, the signature of a path has emerged as a powerful and flexible framework for addressing such problems. Originally introduced by K.-T. Chen \cite{Chen1954} and later developed extensively within the rough path theory initiated by Lyons \cite{Lyons1998,Lyons2007}, the signature provides a systematic representation of a path in terms of its iterated integrals. Roughly speaking, the signature of a path is defined as the infinite collection of all its iterated integrals. This collection of features captures interactions between the components of the path across time and uniquely characterizes the path up to tree-like equivalence; see~\cite{Hambly2010,Boedihardjo2016}.  Owing to its rich algebraic structure, the signature possesses a fundamental universality property: linear combinations of signature coordinates are dense in spaces of continuous path functionals on compact subsets of path space; see, for example, \cite{Levin2013,Kidger2019,Lyons2019}. In this sense, signatures play a role analogous to classical polynomials: they form a universal feature class for continuous path-dependent functionals. Thanks to this universality and their favorable algebraic properties, signature-based methods have found numerous applications. They are widely used in machine learning for the analysis of sequential data, such as image and texture classification, the generation of synthetic data and topological data analysis, as well as in mathematical finance for tasks such as derivative pricing, model calibration, and stochastic control; see, for instance, \cite{Graham2013,Kidger2019,Lyons2019,Chevyrev2020,McLeod2025,Bank2025,Bayer2025a}.

Despite these strong theoretical guarantees, two fundamental challenges arise when applying signature-based methods in practice.

First, in most relevant settings the signature cannot be computed exactly. Even for classical stochastic processes such as Brownian motion, iterated integrals, forming the signature, cannot be evaluated pathwise in closed form. Consequently, practical implementations must rely on suitable approximations. A natural and structurally consistent approach is to start from discrete-time observations of the underlying signal. In practice, one typically has access only to a finite collection of observations rather than to the full continuous trajectory. To reconstruct a continuous object from such data, a widely used approach is to employ piecewise linear interpolation. This yields a continuous path that preserves the temporal ordering and increment structure of the observed signal while remaining computationally tractable. In particular, dedicated implementations such as the \verb|iisignature| library demonstrate that signatures and log-signatures of discretely sampled paths can be computed at relatively high truncation levels and in moderate dimensions with practical runtimes \cite{Reizenstein2018}. Thus, signatures of piecewise linearly interpolated observations are not merely a theoretical surrogate: they are among the objects that are actually computed in applications.

This intrinsic discreteness has led to increasing interest in signature-based methods for discrete-time systems and discretely observed paths. In reservoir computing, discrete-time signature-based systems have been used to explain universality phenomena in recurrent neural networks \cite{Cuchiero2021}. In mathematical finance, discrete-time variants of the signature have been employed to extract features directly from time series, to price exotic derivatives, and to address stochastic control problems \cite{Adachi2021,Lyons2019,Bank2025}, while related discretized It\^o-signature coordinates have recently been studied in the context of self-financing trading strategies and dynamic hedging \cite{Guo2026}. More broadly, truncated signatures of discretely sampled paths yield finite-dimensional representations of sequential data and have been successfully used as feature maps in a variety of contexts \cite{Fermanian2021}.
We emphasize, however, that different notions of discrete signatures coexist in the literature. In particular, the term `discrete-time signature` is also used for the iterated-sums signature of a time series, which is defined directly in terms of iterated sums of its increments and is naturally governed by the quasi-shuffle algebra \cite{Diehl2020}. In contrast, the present paper considers the ordinary iterated-integral signature of the piecewise linear interpolation of discrete observations. Thus, our construction yields a geometric signature with the usual shuffle structure and should not be confused with the iterated-sums signature.

Second, classical universal approximation theorems for signatures are typically formulated for continuous functionals on compact subsets of path space. While mathematically natural, this restriction can be limiting in many relevant settings. Indeed, many probabilistic models are not supported on compact subsets of natural path topology, for example, sample paths of processes such as Brownian motion do not lie in any fixed compact subset with positive probability. The same phenomenon persists for discretely observed paths and their piecewise linear interpolations, whose trajectories are not naturally confined to compact sets.

These observations motivate the development of global approximation results for signatures of discretely observed paths. In particular, it is natural to study approximation in weighted function spaces and in $L^p$-spaces, which provide a flexible framework for controlling the growth of functionals outside compact sets. Establishing such global universal approximation results extends the classical compact-set theory and better reflects the requirements arising in probabilistic modeling, machine learning, and quantitative finance.

In this paper, we establish global universal approximation theorems for both general path-dependent and non-anticipative functionals on spaces of piecewise linear paths, as they arise from discrete-time observations and in numerical approximation. We first prove density results in weighted function spaces of (stopped) paths (Proposition~\ref{prop:UAT weighted} and Proposition~\ref{prop:UAT non-anticipative}). The weighted framework is inspired by and adapted from recent developments in \cite{Cuchiero2024}. In particular, our density results rely on the weighted Stone--Weierstra\ss{} theorem established therein, which we extend to the setting of piecewise linear paths. Subsequently, we derive $L^p$-universal approximation theorems (Theorem~\ref{thm:Lpmain}), stating that linear functionals acting on the signatures of time-extended paths are dense with respect to the standard $L^p$-metric, provided the underlying finite measure satisfies an exponential moment condition. Our $L^p$-approximation results are then formulated directly on path spaces generated by discrete-time data.

To position our contribution within the existing literature, we emphasize that, in contrast to density results in weighted function spaces and $L^p$-approximation results obtained in the continuous setting for geometric rough paths (see, e.g., \cite{Schell2023,Bayer2025,Cuchiero2024b,Ceylan2025,Chevyrev2026}), our framework is intrinsically discrete and built on piecewise linear interpolations of discretely observed signals. Among these, the framework developed in \cite{Ceylan2025} is particularly close to the present work. While the $L^p$-approximation result for a fixed partition can formally be transferred from the continuous-time framework of \cite{Ceylan2025}, the corresponding weighted approximation theorem does not follow directly, since weighted control on the discretely generated subspace does not provide weighted control on the entire rough path space. Consequently, the ambient weighted approximation theorem cannot simply be restricted to the discrete path space. A separate analysis of the piecewise linear path and stopped-path spaces, their topological properties, and the compactness of the weight sublevel sets is therefore required. We refer to Remark~\ref{rem: other Lp results} for a more detailed discussion.

The strength of the discrete-time global framework becomes particularly apparent in probabilistic applications. We first show that piecewise linear interpolations of a suitable class of Gaussian processes satisfy the exponential integrability condition required by our $L^p$-approximation theorems. This yields approximation results for $p$-integrable general path-dependent and non-anticipative functionals of the interpolated Gaussian process, see Subsection~\ref{subsec: Gaussian processes}. Standard Brownian motion and fractional Brownian motion with Hurst parameter $H>1/4$ provide prominent examples of the suitable class of Gaussian processes. Notably, the present result covers the broader range $H>1/4$ for signatures of piecewise linear interpolations, compared with $H>1/3$ for the canonical continuous rough path signatures treated in \cite{Ceylan2025}. We then study the error introduced by replacing the continuous Gaussian signature with the signature of a piecewise linear interpolation. Under suitable covariance regularity assumptions, we obtain an explicit algebraic convergence rate in terms of the mesh size, see Subsection~\ref{subsec: approximation error}. In the Brownian setting, the rate improves to the sharp convergence rate for every fixed linear functional of the signature. These estimates provide a quantitative link between continuous-time signatures and the discrete signatures that can be computed from observations.

Finally, the Brownian case allows us to connect the discrete framework with stochastic differential equations. Recent continuous-time results such as \cite{Ceylan2025} show that solutions to stochastic differential equations driven by Brownian motion, as well as functionals on the Brownian rough path, can be approximated in $L^p$-spaces by linear functionals of the Brownian signature. In the present discrete-time setting, we show that these linear functionals acting on the signature of the piecewise linearly interpolated sample paths of Brownian motion can approximate the solutions to random ordinary as well as stochastic differential equations driven by Brownian motion, see Section~\ref{sec: approximation of SDEs}.

\medskip
\noindent\textbf{Organization of the paper:} In Section~\ref{sec: preliminaries} we recall the essential concepts regarding signatures and rough path theory. In Section~\ref{sec: global UAT}, the universal approximation theorems on spaces of piecewise linear paths are established, stating that linear functionals of the corresponding signatures are dense with respect to $L^p$-and weighted norms, under an integrability condition on the underlying weight function. In Section~\ref{sec: applications}, we verify this integrability condition for Gaussian processes, derive approximation results for Gaussian path functionals, and establish convergence rates for signatures of piecewise linear approximations of Gaussian processes. Section~\ref{sec: approximation of SDEs} we deduce $L^p$-approximation results for random ordinary differential equations and stochastic differential equations driven by Brownian motion.

\medskip
\noindent\textbf{Acknowledgments:} M. Ceylan gratefully acknowledges financial support by the doctoral scholarship programme from the Avicenna-Studienwerk, Germany, and D.J. Pr\"omel his affiliation with the Department of Mathematics at King’s College London, United Kingdom.

\section{Preliminaries}\label{sec: preliminaries}

First we recall some essentials on signatures and rough path theory, where we refer to~\cite{Friz2010, Friz2020} for a more detailed introduction.

\subsection{Algebraic setting for signatures}

For $d\in\N$, let $\R^d$ be the standard $d$-dimensional Euclidean space equipped with the norm $|x|_{\R^d}:=(\sum_{i=1}^d x_i^2)^{1/2}$ for $x=(x_1,\dots,x_d)\in\R^d$. When the underlying space is clear from context, we simply write $|x|$ instead of $|x|_{\R^d}$.

The n-fold tensor product of $ \R^d$ is given by
\begin{equation*}
  (\R^d)^{\otimes 0}:=\R \quad \text{and}\quad(\R^d)^{\otimes n}:=\underbrace{\R^d\otimes\cdots\otimes\R^d}_{n},\quad \text{for }n\in \N.
\end{equation*}
Let $(e_1, \ldots, e_d)$ be the canonical basis of $\R^d$. It is well-known that $\{e_{i_1}\otimes\cdots\otimes e_{i_n}: i_1,\ldots,i_n\in \{1,\ldots,d\}\}$ is a canonical basis for $(\R^d)^{\otimes n}$ and we denote by $e_\emptyset$ the basis element of $(\R^d)^{\otimes 0}$.
Then, every $a^{(n)}\in(\R^d)^{\otimes n}$ admits the coordinate representation
\begin{equation*}
  a^{(n)}=\sum_{i_1,\dots,i_n=1}^d a_{i_1,\dots,i_n}\, e_{i_1}\otimes\cdots\otimes e_{i_n},
\end{equation*}
and we equip $(\R^d)^{\otimes n} $ with the usual Euclidean norm
\begin{equation*}
  | a^{(n)}|_{(\R^d)^{\otimes n}}:=\bigg(\sum_{i_1,\ldots,i_n=1}^d|a_{i_1,\ldots,i_n}|^2\bigg)^{1/2},\quad\text{for }  a^{(n)}\in (\R^d)^{\otimes n}.
\end{equation*}
When no confusion may arise, we write $| a^{(n)} |$ instead of $| a^{(n)}|_{(\R^d)^{\otimes n}}$.

For $d \in \mathbb{N}$, the extended tensor algebra on $\R^{d}$ is defined as
\begin{equation*}
  T((\R^{d})):=\Bigl\{\mathbf{a}:=(a^{(0)}, \ldots,  a^{(n)}, \ldots): a^{(n)} \in(\R^{d})^{\otimes n}\Bigr\},
\end{equation*}
and $a^{(i)}$ is called tensor of level $i$. We equip $T((\R^d))$ with the standard addition ``$+$'', tensor multiplication ``$\otimes$'', and scalar multiplication ``$\cdot$'' defined by
\begin{align*}
  \mathbf{a}+\mathbf{b} & :=\Bigl( a^{(0)}+ b^{(0)}, \ldots,  a^{(n)}+ b^{(n)}, \ldots\Bigr), \\
  \mathbf{a} \otimes \mathbf{b} & :=\Bigl( c^{(0)}, \ldots,  c^{(n)}, \ldots\Bigr),\\
  \lambda\cdot \mathbf{a} & :=\Bigl(\lambda  a^{(0)}, \ldots, \lambda a^{(n)}, \ldots\Bigr),
\end{align*}
for $\mathbf{a}=(a^{(n)})_{n=0}^{\infty}, \mathbf{b} =(b^{(n)})_{n=0}^{\infty}\in T((\R^{d}))$ and $\lambda \in \R$, where $ c^{(n)}:=\sum_{k=0}^{n} a^{(k)} \otimes  b^{(n-k)}$. Let us remark that $(T((\mathbb{R}^{d})),+, \cdot, \otimes)$ is a real non-commutative algebra with neutral element $\mathbf{1}=(1,0, \ldots, 0, \ldots)$. Similarly, we define the truncated tensor algebra of order $N \in \N$ by
\begin{equation*}
  T^{N}(\R^{d}):=\Bigl\{\mathbf{a} \in T((\R^{d})):  a^{(n)}=0, \forall n>N\Bigr\},
\end{equation*}
which we equip with the norm 
\begin{equation*}
  \|\mathbf a\|_{T^N(\R^d)}:=\max_{n=0,\ldots, N}| a^{(n)}|_{(\R^d)^{\otimes n}},\quad \text{for } \mathbf a=(a^{(n)})_{n=0}^N \in T^N(\R^d).
\end{equation*}
Note that $T^{N}(\R^{d})$ has dimension $\sum_{i=0}^{N} d^{i}=$ $(d^{N+1}-1) /(d-1)$. Additionally, we define the tensor algebra $T(\R^d)=\bigcup_{n\in\N}T^n(\R^d)$ and consider the truncated tensor subalgebras $T_0^N(\R^d), T_1^N(\R^d)\subset T^N(\R^d)$ of elements $\mathbf a\in T^N(\R^d)$ with $a^{(0)}=0, a^{(0)}=1$, respectively. Observe that $T_1^N(\R^d)$ is a Lie group under $\otimes$, with unit element $\mathbf 1=(1,0,\ldots,0)$.

The Lie algebra that is generated from $\{\mathbf{e}_1, \dots, \mathbf{e}_d\}$, where $\mathbf{e}_i := (0,e_i,0,\dots) \in T(\R^d)$, and the commutator bracket
\begin{equation*}
  [\mathbf{a},\mathbf{b}] = \mathbf{a} \otimes \mathbf{b} - \mathbf{b} \otimes \mathbf{a}, \qquad \mathbf{a}, \mathbf{b} \in T(\R^d),
\end{equation*}
is called the free Lie algebra $\mathfrak{g}(\R^d)$ over $\R^d$, see e.g. \cite[Section~7.3]{Friz2010}. It is a subalgebra of $T_0((\R^d))$, where we define for $c \in \R$, the tensor subalgebra $T_c((\R^d)) := \{\mathbf{a} = (a^{(n)})_{n=0}^\infty \in T((\R^d)): a^{(0)} = c\}$. The free Lie group $G((\R^d)) := \exp(\mathfrak{g}(\R^d))$ is defined as the tensor exponential of $\mathfrak{g}(\R^d)$, i.e., the image of $\mathfrak{g}(\R^d)$ under the map
\begin{equation*}
  \exp_{\otimes} \colon T_0((\R^d)) \to T((\R^d)), \qquad \mathbf{a} \mapsto 1 + \sum_{k=1}^{\infty} \frac{1}{k!} \mathbf{a}^{\otimes k}.
\end{equation*}
$G((\R^d))$ is a subgroup of $T_1((\R^d))$. In fact, $(G((\R^d)),\otimes)$ is a group with unit element $(1,0,\dots,0,\dots)$, and for all $\mathbf{g} = \exp_{\otimes}(\mathbf{a}) \in G((\R^d))$, the inverse with respect to $\otimes$ is given by $\mathbf{g}^{-1} = \exp_{\otimes}(-\mathbf{a})$, for $\mathbf{g} = \exp_{\otimes}(\mathbf{a}) \in G((\R^d))$. We call elements in $G((\R^d))$ group-like elements. For $N\in \N$, we define the free step-$N$ nilpotent Lie algebra $\mathbf {\mathfrak g}^N(\R^d)\subset T_0^N(\R^d)$ with
\begin{equation*}
  \mathbf {\mathfrak g}^N(\R^d):=\{0\}\oplus\R^d\oplus[\R^d,\R^d]\oplus\cdots\oplus\underbrace{[\R^d,[\ldots,[\R^d,\R^d]]]}_{(N-1)\text{ brackets}},
\end{equation*}
where $(\mathbf g,\mathbf h)\mapsto [\mathbf g,\mathbf h]:=\mathbf g\otimes\mathbf h-\mathbf h\otimes\mathbf g\in T_0^N(\R^d)$ denotes the Lie bracket for $\mathbf g, \mathbf h\in T^N(\R^d)$, see \cite[Chapter~7.3.2 and Definition~7.25]{Friz2010}. The image $G^N(\R^d):=\exp(\mathbf{\mathfrak g}^N(\R^d))$ is a (closed) sub-Lie group of $(T_1^N(\R^d),\otimes)$, called the free nilpotent group of step $N$ over $\R^d$, see \cite[Theorem~7.30]{Friz2010}.

We define $I:=(i_1,\ldots,i_n)$ as a $n$-dimensional multi-index of non-negative integers, i.e. $i_j\in\{1,\ldots,d\}$ for every $j\in\{1,2,\ldots,n\}$. Note that $|I|:=n$ and the empty index is given by $I:=\emptyset$ with $|I|=0$. For $n\ge 1$ or $n\ge2$, we write $I^\prime:=(i_1,\ldots,i_{n-1})$ and $I^{\prime\prime}:=(i_1,\ldots,i_{n-2})$, respectively. Moreover, for each $|I|\ge 1$, we set $e_I:=e_{i_1}\otimes\cdots\otimes e_{i_n}$. This allows us to write $\mathbf{a} \in T((\R^d))$ (and $\mathbf{a} \in T(\R^d)$) as
\begin{equation*}
  \mathbf{a} = \sum_{|I| \geq 0} \langle e_I,\mathbf a\rangle e_I,
\end{equation*}
where $\langle \cdot,\cdot\rangle$ is defined as the inner product of $(\R^d)^{\otimes n}$ for each $n\ge 0$. 

For two multi-indices $I = (i_1, \ldots, i_{|I|})$ and $J = (j_1, \ldots, j_{|J|})$ with entries in $\{1,\ldots,d\}$, the shuffle product is recursively defined by
\begin{equation*}
  e_I \shuffle e_J := (e_{I'} \shuffle e_J) \otimes e_{i_{|I|}} + (e_I \shuffle e_{J'}) \otimes e_{j_{|J|}},
\end{equation*}
with $e_I \shuffle e_\emptyset := e_\emptyset \shuffle e_I := e_I$. For all $\mathbf{a} \in G((\R^d))$, the shuffle product property holds, i.e., for two multi-indices $I = (i_1, \ldots, i_{|I|})$ and $J = (j_1, \ldots, j_{|J|})$, it holds that
\begin{equation*}
  \langle e_I, \mathbf{a} \rangle \langle e_J, \mathbf{a} \rangle = \langle e_I \shuffle e_J, \mathbf{a} \rangle.
\end{equation*}

\subsection{Essentials on rough path theory}

The space of continuous linear maps $f$ from the normed space $(X,\|\cdot\|_X)$ to the normed space $(Y,\|\cdot\|_Y)$ is denoted by $\cL(X;Y)$, which is equipped with the norm $\|f\|_{\cL(X;Y)}:=\sup_{x\in X,\|x\|_X\le 1}\|f(x)\|_Y$. Furthermore, if $Y=\R$, the topological dual space of $X$, denoted by $X^\ast$, is identified with $\cL(X;\R)$. Elements of $X^\ast$ are linear functionals $\ell\colon X\to\R$ and the norm on $X^\ast$ is defined by $\|\ell\|_{X^\ast}:=\sup_{x\in X,\|x\|_X\le 1}|\ell(x)|$.

\medskip

For a Hausdorff topological space $(X,\tau_X)$ and a normed space $(E,\|\cdot\|_E)$, the space of continuous functions $f\colon X\to E$ is denoted by $C(X;E)$ and $C_b(X;E)\subseteq C(X;E)$ denotes the vector subspace of bounded functions. Whenever $E=\R$, we simplify the notation to $C(X):=C(X;\R)$ and $C_b(X):=C_b(X;\R)$, respectively. We write $C_b^k=C_b^k(\R^m;\cL(\R^d;\R^m))$ for the space of $k$-times continuously differentiable functions $f\colon \R^m\to\cL(\R^d;\R^m)$ such that $f$ and all its derivatives up to order $k$ are continuous and bounded, and equip the space $C_b^k=C_b^k(\R^m;\cL(\R^d;\R^m))$ with the norm
\begin{equation*}
  \|f\|_{C_b^k}:=\|f\|_{\infty}+\|Df\|_{\infty}+\ldots+\|D^kf\|_{\infty},
\end{equation*}
where $D^r f$ denotes the $r$-th order derivative of $f$ and $\|\cdot\|_\infty$ denotes the supremum norm on the corresponding spaces of operators.

For a measure space $(X,\mathcal A,\mu)$ and $1\le p<\infty$, the (vector-valued) Lebesgue space $L^p(X,\mu;\R^d)$ is defined as the space of (equivalence classes of) $\mathcal A$-measurable functions $f\colon X\to\R^d$ such that
\begin{equation*}
  \|f\|_{L^p(X,\mu;\R^d)} := \Bigl(\int_X |f(x)|^p \,\dd\mu(x)\Bigr)^{\frac{1}{p}} < \infty.
\end{equation*} 
For $d=1$, we simply write $L^p(X):=L^p(X,\mu):= L^p(X,\mu;\R)$ and $\|\cdot\|_{L^p(X)}:=\|\cdot\|_{L^p(X,\mu;\R)}$.

\medskip

Let $T>0$ be a fixed finite time horizon, then we define the $1$-variation of a path $X\colon [0,T]\to\R^d$ by
\begin{equation*}
  \|X\|_{1\textup{-var}}:=\sup_{\mathcal D\subset [0,T]}\Bigl(\sum_{t_i\in\mathcal D}|X_{t_i}-X_{t_{i-1}}|\Bigr),
\end{equation*}
where the supremum is taken over all partitions $\mathcal D=\{0=t_0<t_1<\cdots<t_n=T\}$ of the interval $[0,T]$ and $\sum_{t_i\in\mathcal D}$ denotes the summation over all points in $\mathcal D$. Then, if $\|X\|_{1\textup{-var}}<\infty$, we say that $X$ is of bounded variation or of finite $1$-variation on $[0,T]$. The space of continuous paths of bounded variation on $[0,T]$ with values in $\R^d$ is denoted by $C^{1\textup{-var}}([0,T];\R^d)$.

\medskip

Let $\pi=\{t_i\}_{i=0}^{n}=\{0=t_0<\cdots<t_{n}=T\}$ be a partition of the interval $[0,T]$. Further, fix an initial value $x_0\in\R^d$, and let $X\colon \pi\to\R^d$ be such that $X_0=x_0$. The continuous bounded variation path $X^\pi\colon [0,T]\to\R^d$ is given by $\{X_{t_k}\}_{k=0}^{n}$ and a linear interpolation in between:
\begin{equation*}
  X^\pi_t=X_{t_{k}}+\frac{t-t_k}{t_{k+1}-t_k}(X_{t_{k+1}}-X_{t_k}),\quad t\in[t_k,t_{k+1}],\quad k=0,\ldots,n-1.
\end{equation*}
The time-extended continuous path $\hX^\pi\colon[0,T]\to\R^{d+1}$ with bounded variation is given by
\begin{equation*}
  \hX_{t_k}:=(t_k,X_{t_k}),
\end{equation*}
and for the values in between we again use a linear interpolation
\begin{equation*}
  \hX^\pi_t=\hX_{t_{k}}+\frac{t-t_k}{t_{k+1}-t_k}(\hX_{t_{k+1}}-\hX_{t_k}),\quad t\in[t_k,t_{k+1}],\quad k=0,\ldots,n-1.
\end{equation*}
We denote by $\mathcal C^\pi:=\{X^\pi:X\colon\pi\to\R^d ,X_0=x_0\}$ the space of bounded variation paths with fixed initial value $x_0\in \R^d$ constructed by a linear interpolation and by
\begin{equation*}
  \widehat{\mathcal C}^\pi:=\{\hX^\pi: X\colon\pi\to\R^d, X_0=x_0\}
\end{equation*}
the corresponding space of time-extended paths.

Observe that the spaces $\mathcal C^\pi$ and $\widehat{\mathcal C}^\pi$ are subspaces of the space of paths with bounded variation.

\begin{remark}
  Throughout this work, we focus on piecewise linear approximations, as they provide a canonical and widely used approximation scheme in signature-based methods. We nevertheless note that the deterministic results presented below, together with their underlying arguments, may extend to other finite-dimensional interpolation schemes, provided that the corresponding path and stopped-path spaces are Polish and retain the compactness and signature-continuity properties required in our analysis, in particular those ensuring that they remain weighted spaces. We do not pursue such extensions here.
\end{remark}

We define the signature of a continuous path of bounded variation as follows: Let $X\in C^{1\textup{-var}}([0,T];\R^d)$ be a path of bounded variation. Then, we denote its signature truncated  at level $N$ on $[s,t]\subset[0,T]$ by
\begin{equation*}
  \X _{s,t}^N:=\Bigl(1,\int_{s<u<t}\dd X_u,\ldots, \int_{s<u_1<\ldots<u_N<t}\dd X_{u_1}\otimes\cdots\otimes\dd X_{u_N}\Bigr)\in T^N(\R^d),
\end{equation*}
for $0\le s\le t\le T$, where the integrals are defined via Riemann--Stieltjes integration, and
\begin{equation*}
  \X_{s,t} :=(1,\X_{s,t}^{(1)},\X_{s,t}^{(2)},\ldots,\X_{s,t}^{(N)},\ldots)\in T((\R^d)),
\end{equation*}
the signature, where
\begin{equation*}
  \X_{s,t}^{(n)}:=\int_{s<u_1<\ldots<u_n<t}\dd X_{u_1}\otimes\cdots\otimes\dd X_{u_n}
\end{equation*}
denotes the $n$-th component of $\X_{s,t}$. For $s=0$ we simply write $\X_t$.
  
Further, for a normed space $(E,\|\cdot\|_E)$ and $\alpha\in (0,1]$, the $\alpha$-H\"older norm of a path $X\in C([0,T];E)$ is given by
\begin{equation*}
  \|X\|_\alpha:=\sup_{0\le s<t\le T}\frac{\|X_t-X_s\|_E}{|t-s|^\alpha}.
\end{equation*}
We write $C^\alpha([0,T];E)$ for the space of paths  $X\in C([0,T];E)$ which satisfy $\|X\|_\alpha<\infty$ and call this space the space of $\alpha$-H\"older continuous paths.

The corresponding metric, we denote by
\begin{equation*}
  d_\alpha(X,Y):=\sup_{\overset{s,t\in[0,T]}{s<t}}\frac{\|X_{s,t}-Y_{s,t}\|_{E}}{|t-s|^\alpha},
\end{equation*}
where $X_{s,t}:=X_t-X_s$ for paths $X,Y\colon [0,T]\to E$, with the same initial point $X_0=Y_0=x_0,$ $x_0\in\R^d$. Moreover, we define the metric
\begin{equation*}
  d_{\infty}(X,Y):=\sup_{t\in[0,T]}\|X_t-Y_t\|_E,
\end{equation*}
for paths $X,Y\colon [0,T]\to E$, with $X_0=Y_0=x_0$.

\begin{remark}
  Observe that by constructing paths through linear interpolation, these paths are even Lipschitz continuous and thus $1$-H\"older continuous. This allows us to equip this space with the $1$-H\"older norm. Consequently, the spaces $\mathcal C^\pi$ and $\widehat{\mathcal C}^\pi$ can be regarded as subspaces of the space of $1$-H\"older continuous paths. Furthermore, as the space of  $1$-H\"older continuous paths is itself a subspace of the space of $\alpha$-H\"older continuous paths, $\mathcal C^\pi$ and $\widehat{\mathcal C}^\pi$ can be equipped with any $\alpha$-H\"older norm for any $\alpha\in(0,1]$. Finally, note that these spaces are also Polish, as all paths in $\mathcal C^\pi$ and $\widehat{\mathcal C}^\pi$ are piecewise linear.
\end{remark}

In the following we denote by $\X^\pi:=(1,X^\pi,\bbX^{\pi,(2)},\ldots)$ and $\hbbX^\pi:=(1,\hX^\pi,\hbbX^{\pi,(2)},\ldots)$ the signature of paths in $\mathcal C^\pi$ and $\widehat{\mathcal C}^\pi$, respectively, where the signature is defined as the tensor series of iterated (Riemann--Stieltjes) integrals. For $N \in \N$, the signature of $\hbbX^\pi$ truncated at level $N$ is the path $t \mapsto \hbbX^{\pi,N}_t$ defined by computing all iterated integrals up to order $N$.

\medskip

The Carnot--Carathéodory norm $\|\cdot\|_{cc}$ on $G^N(\R^d)$ is defined by
\begin{equation*}
  \|\mathbf g\|_{cc}:=\inf\biggl\{\int_{0}^{T}|\dd X_t|\,:X\in C^{1\textup{-var}}([0,T];\R^d)~\text{such that}~ \bbX_T^N=\mathbf g\biggr\},
\end{equation*}
for $\mathbf g\in G^N(\R^d)$. This norm induces a metric via
\begin{equation*}
  d_{cc}(\mathbf g, \mathbf h):=\|\mathbf{g}^{-1}\otimes \mathbf h\|_{cc},
\end{equation*}
for $\mathbf g, \mathbf h \in G^N(\R^d)$.

\medskip

For paths of lower regularity, i.e. for paths of finite $\alpha$-H\"older norm with $\alpha\in(0,1]$, it is necessary to work within the framework of rough path theory. In particular, we recall the definition of (weakly) geometric $\alpha$-H\"older rough paths.

For $\alpha\in(0,1]$, a continuous path $\bX\colon [0,T]\to G^{\lfloor 1/ \alpha\rfloor}(\R^d)$ of the form
\begin{equation*}
  [0,T]\ni t\mapsto \bX_t:=(1,X_t,\bbX_t^{(2)},\ldots,\bbX_t^{(\lfloor 1/ \alpha\rfloor)})\in G^{\lfloor 1/ \alpha\rfloor}(\R^d)
\end{equation*}
with $\bX_0:=\mathbf{1}:=(1,0,\ldots,0)\in G^{\lfloor 1/ \alpha\rfloor}(\R^d)$ is called geometric $\alpha$-H\"older rough path if the $\alpha$-H\"older rough path norm
\begin{equation*}
  \|\bX\|_{cc,\alpha}:=\sup_{\overset{s,t\in[0,T]}{s<t}}\frac{d_{cc}(\bX_s,\bX_t)}{|s-t|^\alpha}
\end{equation*}
is finite. We denote by $C_0^\alpha([0,T];G^{\lfloor 1/ \alpha\rfloor}(\R^d))$ the space of such geometric $\alpha$-H\"older rough paths, which we equip with the metric
\begin{equation*}
  d_{cc,\alpha}(\bX,\bY):=\sup_{\overset{s,t\in[0,T]}{s<t}}\frac{d_{cc}(\bX_{s,t},\bY_{s,t})}{|s-t|^{\alpha}},
\end{equation*}
for $\bX,\bY\in C_0^\alpha([0,T];G^{\lfloor 1/ \alpha\rfloor}(\R^d))$, where $\bX_{s,t}:=\bX_s^{-1}\otimes \bX_t\in G^{\lfloor 1/ \alpha\rfloor}(\R^d)$. Moreover, we define the metric
\begin{equation*}
  d_{cc,\infty}(\bX,\bY):=\sup_{t\in[0,T]}d_{cc}(\bX_t,\bY_t),
\end{equation*}
for $\bX,\bY\in C_0^\alpha([0,T];G^{\lfloor 1/ \alpha\rfloor}(\R^d)).$

\medskip

For $\alpha\in(0,1]$, let $\mathbf X\in C_0^\alpha([0,T];G^{\lfloor 1/ \alpha\rfloor}(\R^d))$ be a geometric $\alpha$-H\"older rough path. Then, the truncated signature of $\mathbf X$ is defined as the unique path extension of $\mathbf X$ yielding a path
\begin{equation*}
  \X^N\colon [0,T]\to G^N(\R^d),
\end{equation*}
for $N>\lfloor 1/ \alpha\rfloor$ specified by Lyons' extension theorem. Then, $\X^N$ has finite $\alpha$-H\"older norm $\|\cdot\|_{cc,\alpha}$ and starts with the unit element $\mathbf 1\in G^N(\R^d)$. The signature of $\mathbf X$ is given by
\begin{equation*}
  [0,T]\ni t\mapsto \X_t:=(1,\X_t^{(1)},\ldots,\X_t^{(\lfloor 1/ \alpha\rfloor)},\ldots,\X_t^{(N)},\ldots).
\end{equation*}

\medskip 

Note that for $\hX^\pi\in\widehat{\mathcal C}^\pi$,  $\hbX^\pi:=(1,\hX^\pi)$ is a geometric $1$-H\"older rough path, that is $\hbX_t^\pi=(1,\hX_t^\pi)\in G^1(\R^{d+1})$ and has finite $\alpha$-H\"older rough path norm for any $\alpha\in(0,1]$. Therefore, $\hbbX_t^{\pi, N}$ is the canonical lift of $\hbX_t^\pi$ to a path with values in $G^N(\R^{d+1})$ (due to the integration by parts rule), and is $\alpha$-H\"older continuous, i.e., $\hbbX^{\pi, N} \in C_0^\alpha([0,T];G^N(\R^{d+1}))$, for any $\alpha\in(0,1]$.

\section{Global universal approximation}\label{sec: global UAT}

In this section, we explore the approximation of $L^p$-functionals using linear combinations of signatures in discrete time. This approach is based on the concept of global universal approximation theorems (UAT) on weighted function spaces, as introduced in \cite{Cuchiero2024}. 

\subsection{Universal approximation on weighted spaces}

We start by introducing the concept of weighted spaces, which will be crucial for obtaining a global universal approximation result. Unlike the standard universal approximation theorem, see for instance \cite{Lyons2019,Kiraly2019} for the universality result on bounded variation spaces, this global version extends beyond the constraint of compact sets. We follow the weighted space framework of \cite{Cuchiero2024}, where the authors demonstrated that the space of weakly geometric rough paths forms a weighted space with an appropriate weight function. Similarly, we aim to show that the space $\widehat{\mathcal C}^\pi$ and the space of stopped paths, introduced later, are also weighted spaces, and in turn to deduce a universal approximation result on weighted spaces using the \emph{weighted Stone--Weierstraß theorem} introduced in \cite{Cuchiero2024}.

\medskip

To begin, we recall the framework of weighted function spaces. First, we define the weighted space, which then allows to define the corresponding weighted function space.

Let $(X,\tau_X)$ be a completely regular Hausdorff topological space. A function $\psi\colon X\to (0,\infty)$ is called an admissible weight function if every pre-image $K_R:=\psi^{-1} ((0,R])=\{x\in X: \psi(x)\le R\}$ is compact with respect to $\tau_X$, for all $R>0$. In this case, we call the pair $(X,\psi)$ a weighted space.

Furthermore, we define the vector space 
\begin{equation*}
  B_\psi(X):=\Bigl\{f\colon X\to \R: \sup_{x\in X}\frac{|f(x)|}{\psi(x)}<\infty\Bigr\},
\end{equation*}
consisting of functions $f\colon X\to \R$, whose growth is controlled by the growth of the weight function $\psi\colon X\to (0,\infty)$, which we equip with the weighted norm $\|\cdot\|_{\cB_\psi(X)}$ given by
\begin{equation}\label{eq:weightednorm}
  \|f\|_{\cB_\psi(X)}:=\sup_{x\in X}\frac{|f(x)|}{\psi(x)},\quad f\in B_\psi(X).
\end{equation}
Note that the embedding $C_b(X)\hookrightarrow B_\psi(X)$ is continuous, allowing us to introduce the space
\begin{equation*}
  \mathcal{B}_\psi(X)
  := \overline{C_b(X)}^{\,\|\cdot\|_{\mathcal{B}_\psi(X)}},
\end{equation*}
which is the closure of $C_b(X)$ with respect to the norm $\|\cdot\|_{\cB_\psi(X)}$. Note that $\mathcal{B}_\psi(X)$ is a Banach space with the norm~\eqref{eq:weightednorm}. We refer to $\cB_\psi(X)$ as a weighted function space.

\medskip

In the following let us discuss, why the space $\widehat{\mathcal C}^\pi$, equipped with the $\alpha^\prime$-H\"older norm for $0\le\alpha^\prime<\alpha\le 1$, is a weighted space. To this end, we define the weight function
\begin{equation}\label{eq: weight function}
  \psi\colon\widehat{\mathcal C}^\pi\to(0,\infty),\quad \psi(\hX^\pi):=\exp(\beta\|\hX^\pi\|_{\alpha}^\gamma),
\end{equation}
for some $\beta>0$, $\gamma\ge 1$ and $\alpha\in(0,1]$. This choice of the weight function is motivated by the weighted Stone--Weierstraß theorem, see \cite[Theorem~3.9]{Cuchiero2024}. As discussed before, we can equip the space $\widehat{\mathcal C}^\pi$ with any $\alpha$-H\"older norm. To ensure the space becomes a weighted space, we equip it with a weaker topology than the norm topology. In particular, we equip this space with the $\alpha^\prime$-H\"older norm for some $0\le \alpha^\prime<\alpha\le 1$. This is necessary to obtain an admissible weight function, i.e., to ensure that the closed unit ball
\begin{equation*}
  K_R:=\{\hX^\pi\in\widehat{\mathcal C}^\pi: \exp(\beta\|\hX^\pi\|_{\alpha}^\gamma)\le R\}
\end{equation*}
is compact w.r.t.~the weaker topology. As we can compactly embed $\alpha$-H\"older spaces into $\alpha^\prime$-H\"older spaces, for $0\le \alpha^\prime<\alpha\le 1$, we thus obtain that the weight function is admissible (see \cite[Example~2.3]{Cuchiero2024}). Therefore, we can apply the weighted Stone--Weierstraß theorem, to obtain a global universal approximation theorem. Hence, let us state the universal approximation theorem on $\mathcal B_\psi(\widehat{\mathcal C}^\pi)$.

\begin{proposition}[Universal approximation theorem on $\mathcal B_\psi(\widehat{\mathcal C}^\pi)$]\label{prop:UAT weighted}
  Let $\psi$ be defined as in \eqref{eq: weight function}. Then, the linear span of the set
  \begin{equation*}
    \Bigl\{\hX^\pi\mapsto \langle e_I,\hbbX^\pi_T\rangle: I\in\{0,\ldots,d\}^N,~N\in\N_0\Bigr\}
  \end{equation*}
  is dense in $\mathcal B_\psi(\widehat{\mathcal C}^\pi)$, i.e., for every map $f\in \mathcal B_\psi(\widehat{\mathcal C}^\pi)$ and every $\epsilon>0$ there exists a linear function $\boldsymbol\ell\colon T((\R^{d+1}))\to\R$ of the form $\hbbX^\pi_T\mapsto \boldsymbol{\ell}(\hbbX^\pi_T):=\sum_{0\le |I|\le N}\ell_I\langle e_I,\hbbX^\pi_T\rangle$, with some $N\in\N_0$ and $\ell_I\in\R$, such that
  \begin{equation*}
    \sup_{\hX^\pi\in\widehat{\mathcal C}^\pi}\frac{|f(\hX^\pi)-\boldsymbol{\ell}(\hbbX^\pi_T)|}{\psi(\hX^\pi)}<\epsilon.
  \end{equation*}
\end{proposition}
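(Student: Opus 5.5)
We apply the weighted Stone--Weierstra\ss{} theorem \cite[Theorem~3.9]{Cuchiero2024} to the subalgebra
\begin{equation*}
  \mathcal A:=\spn\Bigl\{\hX^\pi\mapsto \langle e_I,\hbbX^\pi_T\rangle: |I|\ge 0\Bigr\}\subseteq C(\widehat{\mathcal C}^\pi),
\end{equation*}
with $\widehat{\mathcal C}^\pi$ carrying the $\alpha'$-H\"older topology. We have to verify: (i) $(\widehat{\mathcal C}^\pi,\psi)$ is a weighted space; (ii) $\mathcal A$ is a point-separating subalgebra containing the constants, consisting of $\alpha'$-H\"older continuous functions; (iii) the growth condition of that theorem, namely that every $a\in\mathcal A$ satisfies $\sup_{\hX^\pi} |a(\hX^\pi)|\exp(c\,|a(\hX^\pi)|)/\psi(\hX^\pi)<\infty$ for some $c>0$ (equivalently, $\mathcal A\subseteq\mathcal B_\psi$ with suitable exponential moments).

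For (i), $\widehat{\mathcal C}^\pi$ is metrizable, hence completely regular Hausdorff. A path $\hX^\pi$ is determined by the finitely many values $X_{t_1},\dots,X_{t_n}$. This gives a linear bijection $\Phi\colon(\R^d)^n\to\widehat{\mathcal C}^\pi$, and all H\"older norms on this finite-dimensional affine space are equivalent to the Euclidean norm of the node values. Hence the sublevel set $K_R=\{\|\hX^\pi\|_\alpha\le(\log R/\beta)^{1/\gamma}\}$ is the image of a closed bounded subset of $(\R^d)^n$. It is therefore compact. Here we use $\|\hX^\pi\|_\alpha\ge |T|^{1-\alpha}$ because of the time component, so $\psi>0$ is fine. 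This argument is simpler than the general compact embedding.

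For (ii), the shuffle identity $\langle e_I,\hbbX\rangle\langle e_J,\hbbX\rangle=\langle e_I\shuffle e_J,\hbbX\rangle$ shows that $\mathcal A$ is an algebra, and $I=\emptyset$ gives the constants. Continuity holds because $\hX^\pi\mapsto\hbbX^{\pi,N}_T$ is continuous on finite dimensions: explicitly, $\hbbX^\pi_T=\bigotimes_k\exp_\otimes(\hX_{t_{k+1}}-\hX_{t_k})$ is polynomial in the node values. For point separation we use the time component: the signature determines the path up to tree-like equivalence \cite{Hambly2010}, and the strictly increasing first coordinate $t$ rules out tree-like pieces. In the piecewise linear case one can also argue directly. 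Since the starting point $x_0$ is fixed, the integrals $\langle e_{(i,0,\dots,0)},\hbbX^\pi_T\rangle$, with $k$ time letters after $i$, equal $\int_0^T (T-t)^k/k!\,\dd X^i_t$. By the Weierstra\ss{} theorem these moments determine the measure $\dd X^i$, and hence $X$.

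For (iii), the main obstacle, we bound $|\langle e_I,\hbbX^\pi_T\rangle|\le \|\hX^\pi\|_{1\text{-var}}^{|I|}/|I|!\le C\|\hX^\pi\|_\alpha^{|I|}$. The last inequality holds with a constant depending on $\pi$, by the finite-dimensional norm equivalence above: both sides are controlled by the node increments. So every $a\in\mathcal A$ grows at most polynomially in $\|\hX^\pi\|_\alpha$, say $|a|\le C(1+\|\hX^\pi\|_\alpha^{N})$. Then $|a|e^{c|a|}\le C'\exp\bigl(cC\|\hX^\pi\|_\alpha^{N}+\dots\bigr)$, which is not dominated by $\exp(\beta\|\cdot\|_\alpha^\gamma)$ when $N>\gamma$. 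We therefore use the variant of \cite[Theorem~3.9]{Cuchiero2024} for signature-type algebras in which only the generators $\hX^\pi\mapsto\langle e_I,\hbbX^\pi_T\rangle$ need to satisfy the moment condition, following their rough path example. Alternatively, we note that $\gamma\ge1$ and apply the condition to the linear coordinates, whose exponential is dominated by $\psi$. The point-separation-and-growth reduction is carried out as in \cite{Cuchiero2024} for weakly geometric rough paths. The theorem then gives $\overline{\mathcal A}=\mathcal B_\psi(\widehat{\mathcal C}^\pi)$, which is the claim.
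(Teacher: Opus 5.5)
Your steps (i) and (ii) are fine, but step (iii), on which the density conclusion rests, does not go through as written. You misstate the hypothesis of \cite[Theorem~3.9]{Cuchiero2024}: it does not ask for moderate growth of every $a\in\cA$, which, as you note, fails. Restricting the condition to the generators $\langle e_I,\hbbX^\pi_T\rangle$ does not rescue it. Already $\langle e_i^{\otimes N},\hbbX^\pi_T\rangle=(X^i_T-x^i_0)^N/N!$ makes $\exp(c|\langle e_i^{\otimes N},\hbbX^\pi_T\rangle|)/\psi$ unbounded along straight-line paths whenever $N>\gamma$, for every $c>0$. Besides $\cA\subseteq\cB_\psi(\widehat{\mathcal C}^\pi)$ being a subalgebra, the theorem needs a \emph{point-separating, nowhere vanishing vector subspace} $\widetilde{\cA}\subseteq\cA$ with $\exp(\lambda|\tilde a|)\in\cB_\psi(\widehat{\mathcal C}^\pi)$ for each $\tilde a\in\widetilde{\cA}$. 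Your fallback ``apply the condition to the linear coordinates'' never checks that these coordinates separate points. If it means the level-one coordinates $\langle e_i,\hbbX^\pi_T\rangle=X^i_T-x^i_0$, it is false, since they only see the endpoint. Likewise, separation by all of $\cA$ via tree-like uniqueness is not what is required; separation must happen inside the moderate-growth subspace.

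The repair uses what you already have in (ii): take
\begin{equation*}
  \widetilde{\cA}:=\spn\Bigl(\{1\}\cup\Bigl\{\hX^\pi\mapsto\langle e_{(i,0,\ldots,0)},\hbbX^\pi_T\rangle=\int_0^T\frac{(T-t)^k}{k!}\dd X^i_t:\ i\in\{1,\ldots,d\},\ k\in\N_0\Bigr\}\Bigr).
\end{equation*}
\begin{itemize}
  \item It separates points by your Weierstra\ss{} moment argument, and it vanishes nowhere because it contains $1$.
  \item Each element is affine in $X$, so $|\tilde a|\le C(1+\|\hX^\pi\|_{1\textup{-var}})\le C'(1+\|\hX^\pi\|_\alpha)$ with $C'$ depending on $\pi$.
  \item Since $\gamma\ge1$, $\exp(\lambda|\tilde a|)/\psi\to0$ as $\|\hX^\pi\|_\alpha\to\infty$ for $\lambda<\beta/C'$ (any $\lambda$ if $\gamma>1$). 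Together with continuity on each $K_R$, this gives $\exp(\lambda|\tilde a|)\in\cB_\psi(\widehat{\mathcal C}^\pi)$ by \cite[Lemma~2.7]{Cuchiero2024}.
\end{itemize}
This is exactly the structure of the paper's proof. The paper uses $\langle(e_I\shuffle e_0^{\otimes k})\otimes e_0,\hbbX^\pi_T\rangle=\int_0^T\langle e_I,\hbbX^\pi_t\rangle\frac{t^k}{k!}\dd t$ with $|I|\le1$ instead of your coordinates. Apart from this, your finite-dimensional arguments are valid and simpler than the paper's compact-embedding, RDE-continuity and ball-box estimates. These are: compactness of $K_R$ as a closed and bounded set of node values, continuity of $\hX^\pi\mapsto\hbbX^\pi_T$ via Chen's identity, and polynomial growth via the $1$-variation bound with a $\pi$-dependent constant.
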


\begin{proof}
  The proof follows a similar structure to that of \cite[Theorem~5.4]{Cuchiero2024}, where the authors show the result for weakly geometric $\alpha$-H\"older rough path space. However, since we focus on piecewise linearly interpolated paths, certain arguments require modification. In particular, we will use the equivalence of the norms $|\cdot|_{\R^{d+1}}$ and $\|\cdot\|_{cc}$ on $G^1(\R^{d+1})$ and that for paths of bounded variation, the signature is uniquely determined by the path itself.

  \medskip

  We want to apply the weighted Stone--Weierstraß theorem given in \cite{Cuchiero2024}. Therefore, we have to show that the set $$\mathcal A:=\spn\{\hX^\pi\mapsto \langle e_I,\hbbX^\pi_T\rangle: I\in\{0,\ldots,d\}^N,N\in\N_0\}\subseteq \cB_\psi(\widehat{\mathcal C}^\pi)$$ is a vector subspace and point-separating of $\psi$-moderate growth, which vanishes nowhere. Let us start by showing that $\mathcal A\subseteq \cB_\psi(\widehat{\mathcal C}^\pi)$ is a vector subspace. Therefore, fix some $a\in\mathcal A$ with $\hX^\pi\mapsto a(\hX^\pi):=\langle e_I,\hbbX^\pi_T\rangle\in\R$, for some $I\in\{0,\ldots,d\}^N$ and $N\in\N_0$. For some fixed $R>0$ the pre-image $K_R=\psi^{-1}((0,R])$ is bounded with respect to $\|\cdot\|_\alpha$. As $\cB_\psi(\widehat{\mathcal C}^\pi)$ is the same for each $d_{\alpha^\prime}$, $\alpha^\prime\in [0,\alpha)\cup \{\infty\}$, see \cite{Cuchiero2024}, we show the claim for $d_\infty.$
   
  Note that by Theorem~7.44 in \cite{Friz2010} all homogeneous norms are equivalent on $G^N(\R^{d+1})$. In particular, the Carnot--Carathéodory norm and the norm $\|\mathbf a\|=\max_{i=1,\ldots,N}|\mathbf a^{(i)}|_{(\R^{d+1})^{\otimes i}}^{\frac 1 i}$ are homogeneous and thus equivalent. For $N=1$ this translates to the equivalence of the norms $\|\cdot\|_{cc}$ and $|\cdot|_{\R^{d+1}}$ on $G^1(\R^{d+1})$. Therefore, this yields that the map
  \begin{equation*}
    \iota \colon (\widehat{\mathcal C}^\pi,d_{\infty})\to (C_0^\alpha([0,T];G^1(\R^{d+1})),d_{cc,\infty})\quad\text{via}\quad
    \hX^\pi\mapsto \hbX^\pi,
  \end{equation*}
  is continuous. Moreover, observe that $\hbbX^{\pi,N}$ satisfies the following ODE on $T^N(\R^{d+1})$:
  \begin{equation*}
    \dd \hbbX_t^{\pi,N}=\hbbX_t^{\pi,N}\otimes \dd\hX_t^\pi,\quad
    \hbbX_0^{\pi,N}=\mathbf 1\in G^N(\R^{d+1}).
  \end{equation*}
  This can also be viewed as a rough differential equation (RDE) (see \cite[(C.4)]{Cuchiero2022} for a similar argument), i.e.,
  \begin{equation*}
    \dd \hbbX_t^{\pi,N}=\hbbX_t^{\pi,N}\otimes \dd\hbX_t^\pi,\quad
    \hbbX_0^{\pi,N}=\mathbf 1\in G^N(\R^{d+1}).
  \end{equation*}
  Then, it follows by \cite[Corollary~10.40]{Friz2010} that
  \begin{equation*}
    (\iota(K_R),d_{cc,\infty})\ni \hbX^\pi\mapsto \hbbX^{\pi,N}\in(C_0^{\alpha}([0,T];G^N(\R^{d+1})),d_{cc,\infty})
  \end{equation*}
  is continuous on $\iota(K_R)$. Thus,
  \begin{equation*}
    (K_R,d_\infty)\ni\hX^\pi\mapsto \hbbX^{\pi,N}\in(C_0^\alpha([0,T];G^N(\R^{d+1})),d_{cc,\infty})
  \end{equation*}
  is continuous on $K_R$, as a composition of continuous maps. In addition, we know that the evaluation map
  \begin{equation*}
    (C_0^\alpha([0,T];G^N(\R^{d+1})),d_{cc,\infty})\ni\hbbX^{\pi,N}\mapsto \hbbX_T^{\pi,N}\in(G^N(\R^{d+1}),d_{cc})
  \end{equation*}
  is continuous on $K_R$. Since the composition of continuous functions is again continuous, we obtain that the map
  \begin{equation*}
    (K_R,d_\infty)\ni \hX^\pi\mapsto \hbbX^{\pi,N}_T\in(G^N(\R^{d+1}),d_{cc})
  \end{equation*}
  is continuous on $K_R$. Altogether, we obtain that
  \begin{equation}\label{eq: continuity of a}
    (K_R,d_\infty)\ni\hX^\pi\mapsto a(\hX^\pi)=\langle e_I,\hbbX^\pi_T\rangle\in\R
  \end{equation}
  is continuous on $K_R$ by the linearity of $a$. Hence $a_{|_{K_R}}\in C(K_R)$ for all $R>0$ since $R$ was chosen arbitrarily.

  \medskip

  Using the ball-box estimate \cite[Proposition~7.49, Proposition~7.45]{Friz2010}, we get
  \begin{equation*}
    \|\hbbX_T^{\pi,N}-\hbbX_0^{\pi,N}\|_{T^N(\R^{d+1})}\le C_1\max\Bigl\{d_{cc}(\hbbX_T^{\pi,N},\hbbX_0^{\pi,N}), d_{cc}(\hbbX_T^{\pi,N},\hbbX_0^{\pi,N})^N\Bigr\}
  \end{equation*}
  for some constant $C_1\ge 1$, we have for every $\hX^\pi\in\widehat{\mathcal C}^\pi$,
  \begin{align*}
    |a(\hX^\pi)|&=|\langle e_I,\hbbX_T^\pi\rangle|\le \|\hbbX_T^{\pi,N}\|_{T^N(\R^{d+1})}\le \|\hbbX_T^{\pi,N}-\hbbX_0^{\pi,N}\|_{T^N(\R^{d+1})}+1\\
    &\le C_1( d_{cc}(\hbbX_T^{\pi,N},\hbbX_0^{\pi,N})^N+2).
  \end{align*}
  Using the inequality $d_{cc}(\hbbX_s^{\pi,N},\hbbX_t^{\pi,N})\le C_{N,\alpha}d_{cc}(\hbX^\pi_s,\hbX^\pi_t)$ for some constant $C_{N,\alpha}>0$ (see \cite[Theorem~9.5]{Friz2010}), we obtain
  \begin{align*}
    |a(\hX^\pi)|
    &\le C_1(d_{cc}(\hbbX_T^{\pi,N},\hbbX_0^{\pi,N})^N+2)\\
    &\le C_1\Bigl(T^{\alpha N} \Bigl(\sup_{0\le s<t\le T}\frac{d_{cc}(\hbbX_s^{\pi,N},\hbbX_t^{\pi,N})}{|t-s|^\alpha}\Bigr)^N+2\Bigr)\\
    &\le C_1\Bigl(C_{N,\alpha}^NT^{\alpha N} \Bigl(\sup_{0\le s<t\le T}\frac{d_{cc}(\hbX_s^\pi,\hbX_t^\pi)}{|t-s|^\alpha}\Bigr)^N+2\Bigr).
  \end{align*}
  
  Using the equivalence of the norms $\|\cdot\|_{cc}$ and $|\cdot|_{\R^{d+1}}$ on $G^1(\R^{d+1})$, i.e.~$d_{cc}(\hbX_s^\pi,\hbX_t^\pi)\le C_2|\hX_t^\pi-\hX_s^\pi|_{\R^{d+1}}$ for some constant $C_2>0$, yields
  \begin{align*}
    |a(\hX^\pi)|
    &\le C_1\Bigl(C_{N,\alpha}^NT^{\alpha N} \Bigl(\sup_{0\le s<t\le T}\frac{d_{cc}(\hbX_s^\pi,\hbX_t^\pi)}{|t-s|^\alpha}\Bigr)^N+2\Bigl)\\
    &\le C_1\Bigl(C_{N,\alpha}^NC_2^NT^{\alpha N} \Bigl(\sup_{0\le s<t\le T}\frac{|\hX_t^\pi-\hX_s^\pi|_{\R^{d+1}}}{|t-s|^\alpha}\Bigr)^N+2\Bigl)\\
    &=C_1(C_{N,\alpha}^NC_2^NT^{\alpha N}\|\hX^\pi\|_\alpha^N+2).\\
  \end{align*}
  Thus,
  \begin{equation*}
    \lim_{R\to\infty}\sup_{\hX^\pi\in\widehat{\mathcal C}^\pi\backslash K_R}\frac{|a(\hX^\pi)|}{\psi(\hX^\pi)}
    \le C_1 \sup_{\hX^\pi\in\widehat{\mathcal C}^\pi\backslash K_R}\frac{ \Bigl(C_{N,\alpha}^NC_2^NT^{\alpha N} \|\hX^\pi\|_\alpha^N+2\Bigl)}{\exp(\beta\|\hX^\pi\|_\alpha^\gamma)}=0,
  \end{equation*}
  as the exponential function grows faster than any polynomial. Then, by Lemma~2.7~(ii) in \cite{Cuchiero2024} (or Theorem~2.7 in \cite{Dorsek2010}) it follows that $a\in\cB_\psi(\widehat{\mathcal C}^\pi)$ and thus $\mathcal A\subseteq\cB_\psi(\widehat{\mathcal C}^\pi)$. For the point separation we need to show that
  \begin{align*}
    \tilde{\mathcal A}:=\spn\{\hX^\pi\mapsto \langle (e_I\shuffle e_0^{\otimes k})\otimes e_0,\hbbX^\pi_T\rangle: I\in\{0,\ldots,d\}^N,k\in\N_0, N\in\{0,1\}\}\subseteq\mathcal A
  \end{align*}
  is point-separating such that $\hX^\pi\mapsto\exp(|\tilde a(\hX^\pi)|)\in \cB_\psi(\widehat{\mathcal C}^\pi)$ for all $\tilde a\in\cB_\psi(\widehat{\mathcal C}^\pi)$. In contrast to \cite{Cuchiero2024} we consider paths of bounded variation, where higher order lifts are uniquely determined by the first order. Therefore, we have to show that for $\hX^\pi,\hY^\pi\in\widehat{\mathcal C}^\pi$ distinct that there exists some $k\in\N_0$, $N\in\{0,1\}$ and $I\in\{0,\ldots,d\}^N$, such that 
  \begin{equation*}
    \langle (e_I\shuffle e_0^{\otimes k})\otimes e_0,\hbbX^\pi_T\rangle\neq\langle (e_I\shuffle e_0^{\otimes k})\otimes e_0,\hbbY^\pi_T\rangle.
  \end{equation*}
  
  As in \cite{Cuchiero2024} we can proceed by a proof of contradiction. Thus assume that for every $k\in\N_0$, $N\in\{0,1\}$ and $I\in\{0,\ldots,d\}^N$ it holds that
  \begin{equation*}
    \langle (e_I\shuffle e_0^{\otimes k})\otimes e_0,\hbbX^\pi_T\rangle=\langle (e_I\shuffle e_0^{\otimes k})\otimes e_0,\hbbY^\pi_T\rangle.
  \end{equation*}
  By the shuffle property, we observe that
  \begin{align*}
    \langle (e_I\shuffle e_0^{\otimes k})\otimes e_0,\hbbX^\pi_T\rangle&=\int_0^T\langle e_I\shuffle e_0^{\otimes k},\hbbX^\pi_t\rangle\dd t=\int_0^T\langle e_I,\hbbX^\pi_t\rangle\langle e_0^{\otimes k},\hbbX^\pi_t\rangle\dd t\\ \nonumber
    &=\int_0^T\langle e_I,\hbbX^\pi_t\rangle\frac{t^k}{k!}\dd t,
  \end{align*}
  for all $\hX^\pi\in\widehat{\mathcal C}^\pi$. Hence, we can conclude that for every $k\in\N_0$, $N\in\{0,1\}$ and $I\in\{0,\ldots,d\}^N$ we obtain
  \begin{equation*}
    \int_0^T\langle e_I,\hbbX^\pi_t-\hbbY^\pi_t\rangle\frac{t^k}{k!}\dd t=0.
  \end{equation*}
  By \cite[Corollary~4.24]{Brezis2011}, we then deduce that $\langle e_I,\hbbX_t^\pi\rangle=\langle e_I,\hbbY_t^\pi\rangle$ for all $I\in\{0,\ldots,d\}^N$, $N\in\{0,1\}$. This contradicts the assumption that $\hX^\pi,\hY^\pi$ are distinct. Thus, we conclude that $\tilde{\mathcal A}$ is point-separating. Therefore, as we consider continuous paths of bounded variation, it is enough to consider $N\in\{0,1\}$ and $k\in\N_0$.
  
  Lastly, we have to show that $\exp(\lambda|\tilde a(\cdot)|)\in\mathcal B_\psi(\widehat{\mathcal C}^\pi)$. This is precisely the step where the choice of the exponential weight function, as given in \eqref{eq: weight function}, becomes crucial. Therefore, let us fix some $(\hX^\pi\mapsto \tilde a(\hX^\pi)=\ell(\hbbX^\pi_T))\in\tilde{\mathcal A}$ with
  \begin{equation*}
    \ell(\hbbX^\pi_T)=\sum_{|I|\le N}\sum_{k=0}^K a_{I,k}\langle(e_I\shuffle e_0^{\otimes k})\otimes e_0,\hbbX^\pi_T\rangle,
  \end{equation*}
  for some $K\in\N_0$, $N\in\{0,1\}$ and $a_{I,k}\in\R$. Then, by similar arguments as for \eqref{eq: continuity of a}, we have $\exp(|\tilde a(\cdot)|)_{|_{K_R}}\in C(K_R)$ for all $R>0$.

  Moreover, using the ball-box estimate, as before, it follows that
  \begin{align*}
    |\tilde a(\hX^\pi)|=|\ell(\hbbX_T^\pi)|\le C_1\|\ell\|_{T^{N+K+1}(\R^{d+1})^\ast}\Bigl(T^{\alpha(K+1)N}C_{N,\alpha}^N\Bigl(\sup_{0\le s<t\le T}\frac{d_{cc}(\hbX_s^\pi,\hbX_t^\pi)}{|t-s|^\alpha}\Bigr)^N+1\Bigr),
  \end{align*}
  for some constants $C_1,C_{N,\alpha}>0$. Using the equivalence of the norms $\|\hbX^\pi\|_{cc}$ and $|\hX^\pi|_{\R^{d+1}}$ with some constant $C_2>0$, yields
  \begin{align*}
    &C_1\|\ell\|_{T^{N+K+1}(\R^{d+1})^\ast}\Bigl(T^{\alpha(K+1)N}C_{N,\alpha}^N\Bigl(\sup_{0\le s<t\le T} \frac{d_{cc}(\hbX_s^\pi,\hbX_t^\pi)}{|t-s|^\alpha}\Bigr)^N+1\Bigr)\\
    &\quad\le C_1\|\ell\|_{T^{N+K+1}(\R^{d+1})^\ast}\Bigl(T^{\alpha(K+1)N}C_{N,\alpha}^NC^N_2\Bigl(\sup_{0\le s<t\le T}\frac{|\hX_t^\pi-\hX_s^\pi|_{\R^{d+1}}}{|t-s|^\alpha}\Bigr)^N+1\Bigr)\\
    &\quad=C_1\|\ell\|_{T^{N+K+1}(\R^{d+1})^\ast}\Bigl(T^{\alpha(K+1)N}C_{N,\alpha}^NC^N_2\|\hX^\pi\|^N_{\alpha}+1\Bigr).\\
  \end{align*}
  Then, for $C:=\max(C_1C_{N,\alpha}^NC^N_2\|\ell\|_{T^{N+K+1}(\R^{d+1})^\ast}T^{\alpha(K+1)N},C_1\|\ell\|_{T^{N+K+1}(\R^{d+1})^\ast})$, we obtain
  \begin{equation*}
    \lim_{R\to\infty}\sup_{\hX^\pi\in\widehat{\mathcal C}^\pi\backslash K_R}\frac{\exp(\lambda|\tilde a(\hX^\pi)|)}{\psi(\hX^\pi)}\le \lim_{R\to\infty}\sup_{\hX^\pi\in\widehat{\mathcal C}^\pi\backslash K_R}\exp(\lambda C(\|\hX^\pi\|^N_{\alpha}+1)-\beta\|\hX^\pi\|_{\alpha}^\gamma)=0,
  \end{equation*}
  for $\lambda<\frac{\beta}{C}$ small enough and since the exponent tends to $-\infty$ as $R\to\infty$ and using that $\gamma\ge 1\ge N$, the last equality follows and thus we have by Lemma~2.7~(ii) in \cite{Cuchiero2024} that $\exp(\lambda|\tilde a(\cdot)|)\in\cB_\psi(\widehat{\mathcal C}^\pi)$ for any $\tilde a\in\tilde{\mathcal A}.$

  Lastly, we show that $\tilde\cA$ vanishes nowhere. Using the map $(\hX^\pi\mapsto\tilde a(\hX^\pi):=\langle (e_\emptyset\shuffle e_0^{\otimes 0})\otimes e_0,\hbbX^\pi_T\rangle)\in\tilde\cA$, then $\tilde a(\hX^\pi)=\int_0^T\dd t=T\neq 0$, for all $\hX^\pi\in\widehat{\mathcal C}^\pi$. Then, by the weighted real-valued Stone--Weierstraß theorem, it follows that $\cA$ is dense in $\cB_\psi(\widehat{\mathcal C}^\pi)$. All in all, we see that the proof is very similar to the proof of Theorem~5.4 in \cite{Cuchiero2024}.
\end{proof}

We aim to establish a similar result for the space of stopped paths, which plays a fundamental role in functional It\^o calculus, see \cite{Cont2013, Dupire2019}, and also in rough path settings, see e.g.~\cite{Bayer2025,Cuchiero2024b}. To this end, let us first introduce this space and recall the notion of a non-anticipative path functional.

\medskip

The space of stopped paths in $\widehat{\mathcal C}^\pi$ is defined by
\begin{equation*}
  \Lambda^\pi_T:=\bigcup_{t\in[0,T]}\widehat{\mathcal C}^\pi_t:=\bigcup_{t\in[0,T]}\{\hX^\pi_{[0,t]} \in\widehat{\mathcal C}^\pi: \hX^\pi_s=(s,X^\pi_s)~\text{for all}~s\in[0,t]\}.
\end{equation*}

We equip this space with the metric (cf.~\cite[Section~2.2]{Cont2013})
\begin{equation*}
  d_{\Lambda,\alpha^\prime}(\hX^\pi_{[0,t]},\hY^\pi_{[0,s]}):=|t-s|+d_{\alpha^\prime}(\hX_{[0,T]}^{\pi,t},\hY_{[0,T]}^{\pi,s}), 
\end{equation*}
for some $0\le \alpha^\prime<\alpha$, where we denote by $\hX^{\pi,t}$ the process $(\hX^{\pi,t})_u=(u,X^{\pi,t}_u)$, i.e., the process where $X^\pi$ is stopped at time $t$ but the time-augmentation is not stopped.
  
Moreover, we call a map $f\colon \Lambda^\pi_T\to\R$ a non-anticipative functional if it is measurable. It is called continuous if $f\colon \Lambda^\pi_T\to\R$ is continuous with respect to the metric $d_{\Lambda,\alpha^{\prime}}$. We further define the quotient map
\begin{equation}\label{eq: quotient map}
  \phi\colon [0,T]\times\widehat{\mathcal C}^\pi\to\Lambda^\pi_T,\qquad \phi(t,\hX^\pi)=\hX^\pi_{[0,t]},
\end{equation}
where $\hX^\pi_{[0,t]}$ denotes the restriction of $\hX^\pi$, which is defined on $[0,T]$, to the sub-interval $[0,t], t\le T$. Moreover, the map $\phi$ is continuous. The proof follows analogously to \cite[Lemma~A.3]{Ceylan2025}, with the rough path metric replaced by the H\"older metric. Furthermore, by similar arguments as in \cite[Lemma~A.4]{Ceylan2025} it follows that $\Lambda_T^\pi$ is a Polish space. Therefore, we omit the proof.

To obtain a global universal approximation result on $\Lambda_T^\pi$, we need to show that $(\Lambda_T^\pi,\psi)$ is a weighted space. To that end, we consider the following weight function
\begin{equation}\label{eq: weight function 2}
  \psi(\hX^\pi_{[0,t]}):=\exp(\beta\|\hX^{\pi,t}_{[0,T]}\|_\alpha^\gamma)
\end{equation}
with $\gamma\ge 1$ and $\beta>0$.

\begin{lemma}\label{lem: weighted space}
  Let $0\le \alpha^{\prime}<\alpha\le 1$ and let $\psi$ be defined as in \eqref{eq: weight function 2}. Then $K_R:=\psi^{-1}((0,R])=\{\hX^\pi_{[0,t]}\in\Lambda^\pi_T: \psi(\hX^\pi_{[0,t]})\le R\}$ is compact w.r.t.~the topology induced by $d_{\Lambda,\alpha^\prime}$ and $(\Lambda_T^\pi,\psi)$ is a weighted space.
\end{lemma}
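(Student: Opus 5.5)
The plan is to realise $K_R$ as a continuous image of a compact set. Consider the set
\begin{equation*}
  \widetilde K_R:=\{(t,\hX^\pi)\in[0,T]\times\widehat{\mathcal C}^\pi:\ \exp(\beta\|\hX^{\pi,t}_{[0,T]}\|_\alpha^\gamma)\le R\}.
\end{equation*}
Using the quotient map $\phi$ from \eqref{eq: quotient map}, we have $K_R=\phi(\widetilde K_R)$. Since $\phi$ is continuous (as noted after \eqref{eq: quotient map}), it suffices to show that $\widetilde K_R$ is compact in $[0,T]\times(\widehat{\mathcal C}^\pi,d_{\alpha'})$.

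\textbf{Step 1: replace $\hX^\pi$ by its stopped version.} We do not need all of $\widetilde K_R$. It is enough to use the subset of pairs $(t,\hX^{\pi,t})$, because $\phi(t,\hX^{\pi,t})=\phi(t,\hX^\pi)$. The stopped path $\hX^{\pi,t}$ need not lie in $\widehat{\mathcal C}^\pi$ when $t\notin\pi$, since it has a kink at $t$. So we work in the ambient space $[0,T]\times C^{\alpha'}([0,T];\R^{d+1})$ and consider
\begin{equation*}
  M_R:=\{(t,\hX^{\pi,t}): t\in[0,T],\ \hX^\pi\in\widehat{\mathcal C}^\pi,\ \|\hX^{\pi,t}\|_\alpha\le (\log R/\beta)^{1/\gamma}\}.
\end{equation*}
The map $(t,Z)\mapsto Z|_{[0,t]}$ is continuous from this ambient space into $\Lambda$ with metric $d_{\Lambda,\alpha'}$. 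Indeed, $d_{\Lambda,\alpha'}$ is by definition $|t-s|+d_{\alpha'}$ of the stopped paths, so this map is essentially an isometry onto its image. Hence it suffices to show that $M_R$ is compact.

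\textbf{Step 2: precompactness.} The second components of $M_R$ have uniformly bounded $\alpha$-Hölder norm and a fixed initial value $(0,x_0)$. By the compact embedding $C^\alpha\hookrightarrow C^{\alpha'}$ (Arzelà--Ascoli plus interpolation, cf.\ \cite[Example~2.3]{Cuchiero2024}), $M_R$ is relatively compact in $[0,T]\times C^{\alpha'}$.

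\textbf{Step 3: closedness (the main obstacle).} Take $(t_n,\hX^{\pi,t_n}_n)\to(t,Z)$. We must show that $Z=\hY^{\pi,t}$ for some $\hY^\pi\in\widehat{\mathcal C}^\pi$, and that the $\alpha$-norm bound persists.
\begin{itemize}
  \item The bound persists because the $\alpha$-Hölder norm is lower semicontinuous under uniform convergence.
  \item For the structure of $Z$, use that $\pi$ is a fixed finite partition. Then $\hX^{\pi,t_n}_n$ is determined by the finitely many values $X_{n,t_k}$ with $t_k\le t_n$, together with the kink at $t_n$.
  \item The relevant values $X_{n,t_k}$, for $t_k$ up to the first grid point after $t_n$, are bounded uniformly in $n$ by the Hölder bound. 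The only exception is the next grid value, which is only constrained through its slope on $[t_k,t_n]$. We pass to a subsequence along which these values and slopes converge.
  \item The limit $Z$ is then piecewise linear on $\pi\cap[0,t]$ with a linear piece up to $t$ and constant afterwards. We choose the next grid value of $Y$ to reproduce that last slope, and choose the later values arbitrarily. This gives $Z=\hY^{\pi,t}$.
  \item The delicate case is $t_n\downarrow t_k\in\pi$, where the last slope may degenerate. There $Z$ simply stops at $t_k$, which is still of the required form.
\end{itemize}

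\textbf{Conclusion.} $M_R$ is compact, so its image $K_R$ is compact. Moreover, $\psi>0$ and $\Lambda^\pi_T$ is metric, hence completely regular Hausdorff. Therefore $(\Lambda^\pi_T,\psi)$ is a weighted space.
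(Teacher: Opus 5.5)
Your proof is correct and takes essentially the same route as the paper: a uniform $\alpha$-Hölder bound on the stopped extensions, compactness of $[0,T]$ plus the compact embedding $C^\alpha\hookrightarrow C^{\alpha'}$, identification of the limit as a stopped piecewise linear path via finiteness of $\pi$, and lower semicontinuity of $\|\cdot\|_\alpha$ (the paper phrases this as direct sequential compactness of $K_R$, you as compactness of $M_R$ carried over by the isometry $(t,\hX^{\pi,t})\mapsto\hX^\pi_{[0,t]}$). Your Step~3 supplies the case analysis for the limit, including $t_n\downarrow t_k\in\pi$, which the paper only asserts; note that your initial set $\widetilde K_R$ is not itself compact because the path after time $t$ is unconstrained, which is exactly why the Step~1 reduction to $M_R$ is necessary.
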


\begin{proof}
Since $\psi\geq 1$, the assertion is immediate for $0<R<1$.
Let therefore $R\geq1$ and let $(\lambda_n)_{n\in\N}$ be an arbitrary sequence in $K_R$. For every $n\in\N$ there exists a $t_n\in[0,T]$ and $\X^\pi_n\in\widehat{\mathcal C}^\pi$ such that $\lambda_n=\hX^\pi_{n,[0,t_n]}$. Denote by $\hX^{\pi,t_n}$ the canonical extension of $\lambda_n$ to $[0,T]$, defined by $(\hX^{\pi,t_n})_u=(u,\hX^{\pi,t_n}_{n,u})$, $u\in[0,T]$. Since $\lambda_n\in K_R$ we have $\psi(\lambda_n)\le R$ and thus $\|\lambda_n\|_\alpha\le (\frac{\log(R)}{\beta})^{\frac{1}{\gamma}}$, for every $n\in\N$. After passing to a subsequence, compactness of $[0,T]$ and the compact embedding of $\alpha$-H\"older spaces into $\alpha^\prime$-H\"older spaces, $\alpha^\prime<\alpha$, yield $t_n\to t$, $\hX^{\pi,t_n}_n\to Y$ in $C^{\alpha^\prime}([0,T];\R^{d+1})$. Since $\pi$ is fixed and finite, the limit $Y$ is again the canonical extension of a stopped piecewise linear path, thus $Y=\hX^{\pi,t}$ for some $\hX^{\pi}_{[0,t]}\in \Lambda_T^\pi$. Moreover, by the lower semicontinuity of the $\alpha$-H\"older norm we have $\|\hX^{\pi,t}\|_{\alpha}\le \liminf_{n\to\infty}\|\hX_n^{\pi,t_n}\|_\alpha\le (\frac{\log(R)}{\beta})^{\frac{1}{\gamma}}$. Therefore, $\hX^{\pi,t}\in K_R$. Finally, $d_{\Lambda,\alpha^\prime}(\lambda_n,\hX^{\pi}_{[0,t]})=|t-t_n|+d_{\alpha^\prime}(\hX^{\pi,t_n}_n,\hX^{\pi,t})\to 0$. Thus $K_R$ is sequentially compact and since $d_{\Lambda,\alpha^\prime}$ is a metric, compact. Consequently, $(\Lambda_T^\pi,\psi)$ is a weighted space.
\end{proof}

\begin{proposition}[Universal approximation theorem on $\cB_\psi(\Lambda_T^\pi$)]\label{prop:UAT non-anticipative}
  Let $\psi$ be defined as in \eqref{eq: weight function 2}. Then, the linear span of the set
  \begin{equation*}
    \Bigl\{ \hX^\pi_{[0,t]}\to\langle e_I,\hbbX^\pi_t\rangle: I\in \{0,\ldots,d\}^N, N\in \N_0\Bigr\}
  \end{equation*}
  is dense in $\cB_\psi(\Lambda_T^\pi)$, i.e., for every map $f\in \cB_\psi(\Lambda_T^\pi)$ and every $\epsilon>0$, there exists a linear function $\boldsymbol\ell\colon T((\R^{d+1}))\to\R$ of the form $\hbbX^\pi_t\mapsto\boldsymbol{\ell}(\hbbX^\pi_t):=\sum_{|I|\le N}\ell_I\langle e_I, \hbbX^\pi_t\rangle$, with some $N\in \N_0$ and $\ell_I\in\R$, such that
  \begin{equation*}
    \sup_{\hX^\pi_{[0,t]}\in \Lambda_{T}^\pi}\frac{|f(\hX^\pi_{[0,t]})-\boldsymbol{\ell}(\hbbX^\pi_t)|}{\psi(\hX^\pi_{[0,t]})}<\epsilon.
  \end{equation*}
\end{proposition}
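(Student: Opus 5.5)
The plan is to apply the weighted Stone--Weierstra\ss{} theorem of \cite{Cuchiero2024} on the weighted space $(\Lambda_T^\pi,\psi)$, which is available by Lemma~\ref{lem: weighted space}. The argument follows the proof of Proposition~\ref{prop:UAT weighted}. Set $\mathcal A:=\spn\{\hX^\pi_{[0,t]}\mapsto\langle e_I,\hbbX^\pi_t\rangle\}$. I need to check four properties: $\mathcal A\subseteq\cB_\psi(\Lambda^\pi_T)$; some subalgebra $\tilde{\mathcal A}\subseteq\mathcal A$ is point-separating; $\exp(\lambda|\tilde a|)\in\cB_\psi(\Lambda_T^\pi)$ for $\tilde a\in\tilde{\mathcal A}$ and $\lambda$ small; and $\tilde{\mathcal A}$ vanishes nowhere.

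\emph{Membership and growth.} First I express the signature of the stopped path through its extension. For $u\le t$, $\hX^{\pi,t}_u$ coincides with $\hX^\pi_u$. On $[t,T]$ only the time coordinate moves. Hence, by Chen's relation, $\hbbX^{\pi,t}_T=\hbbX^\pi_t\otimes\exp_\otimes((T-t)e_0)$, so $\hbbX^\pi_t=\hbbX^{\pi,t}_T\otimes\exp_\otimes(-(T-t)e_0)$.

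This gives continuity of $\hX^\pi_{[0,t]}\mapsto\langle e_I,\hbbX^\pi_t\rangle$ on each $K_R$. The map $(t,\hX^{\pi,t})\mapsto \hbbX^{\pi,t,N}_T$ is continuous on $K_R$ in $d_\infty$ by the same route as in Proposition~\ref{prop:UAT weighted} (continuity of $\iota$, then \cite[Corollary~10.40]{Friz2010}), and the correction factor is continuous in $t$.

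For the growth bound I will use the ball-box estimate together with \cite[Theorem~9.5]{Friz2010}. These give $|\langle e_I,\hbbX^\pi_t\rangle|\le C(\|\hX^{\pi,t}\|_\alpha^N+1)$, with a constant uniform in $t\in[0,T]$. This is polynomial, so the ratio to $\psi$ vanishes outside $K_R$, and Lemma~2.7(ii) of \cite{Cuchiero2024} gives membership.

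\emph{Point separation.} I take
\[
\tilde{\mathcal A}:=\spn\{\langle (e_I\shuffle e_0^{\otimes k})\otimes e_0,\hbbX^\pi_t\rangle\colon |I|\le1,\ k\in\N_0\}\cup\{\langle e_0,\hbbX^\pi_t\rangle\}.
\]
Including the function $\langle e_0,\hbbX^\pi_t\rangle=t$ is the new ingredient, since the stopping time itself must now be separated. Suppose two stopped paths $\hX^\pi_{[0,t]}$ and $\hY^\pi_{[0,s]}$ agree on $\tilde{\mathcal A}$. Then $t=s$. The shuffle identity then gives $\int_0^t\langle e_I,\hbbX^\pi_u-\hbbY^\pi_u\rangle u^k\dd u=0$ for all $k$. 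By \cite[Corollary~4.24]{Brezis2011} the first-level coordinates agree on $[0,t]$, so the stopped paths coincide.

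\emph{Exponential moments and non-vanishing.} The exponential bound follows exactly as before. Elements of $\tilde{\mathcal A}$ grow at most linearly in $\|\hX^{\pi,t}\|_\alpha$, because $N\le1$ and powers of $t$ are bounded by powers of $T$. Since $\gamma\ge1$, $\exp(\lambda|\tilde a|)/\psi\to0$ for small $\lambda$. Nowhere vanishing is the delicate point: $\langle e_\emptyset\otimes e_0,\hbbX^\pi_t\rangle=t$ vanishes at $t=0$. I would remedy this by adding the constant $\langle e_\emptyset,\hbbX^\pi_t\rangle=1$ to $\tilde{\mathcal A}$; it is trivially in $\cB_\psi$ and has bounded exponential.

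\emph{Main obstacle.} I expect the main obstacle to be the careful handling of the stopped-path topology: showing uniform-in-$t$ continuity of the signature map on $K_R$ with respect to $d_{\Lambda,\alpha'}$. The Chen-identity reduction above is how I would address it.
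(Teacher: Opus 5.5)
Your proposal is correct and follows essentially the paper's route. You apply the weighted Stone--Weierstra\ss{} theorem on $(\Lambda_T^\pi,\psi)$ via Lemma~\ref{lem: weighted space}, with the separating family $\langle (e_I\shuffle e_0^{\otimes k})\otimes e_0,\hbbX^\pi_t\rangle$, $|I|\le 1$, augmented by the constant $\langle e_\emptyset,\hbbX^\pi_t\rangle$, and transport the arguments of Proposition~\ref{prop:UAT weighted} to stopped paths; your Chen-identity reduction and the separation of $t$ spell out details the paper omits.

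A few small remarks:
\begin{itemize}
\item The function $t=\langle e_0,\hbbX^\pi_t\rangle$ is already in that family ($I=\emptyset$, $k=0$), so it is not a new ingredient.
\item The algebra property required by Stone--Weierstra\ss{} is needed for $\mathcal A$, where it follows from the shuffle identity, not for $\tilde{\mathcal A}$.
\item The continuity you flag as the main obstacle is elementary. For fixed $\pi$, a stopped path is determined by $t$ and finitely many nodal values, and $\hbbX^\pi_t$, a product of exponentials of segment increments, depends continuously on these. So you do not need Corollary~10.40, which, applied to $G^1$-valued paths, really requires $\alpha>1/2$.
\end{itemize}
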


\begin{proof}
  First note that Lemma~\ref{lem: weighted space} implies that $(\Lambda_T^\pi,\psi)$ is a weighted space. Hence, we may apply the weighted Stone--Weierstraß theorem introduced in \cite[Theorem~3.9]{Cuchiero2024}. It therefore suffices to verify that $\mathcal A:=\spn\{\hX^\pi_{[0,t]}\mapsto \langle e_I,\hbbX_t^\pi\rangle: I\in\{0,\ldots,d\}^N, N\in\N_0\}\subseteq\cB_\psi(\Lambda_T^\pi)$ is a vector subspace and point-separating of $\psi$-moderate growth, which vanishes nowhere. To that end, we need to prove that
  \begin{align*}
    \widetilde{\cA}
    &:= \spn\Bigl(
      \Bigl\{
        \hX^\pi_{[0,t]}
        \mapsto
        \langle e_\emptyset,\hbbX^\pi_t\rangle
      \Bigr\}\nonumber\\
      &\qquad \cup
      \Bigl\{
        \hX^\pi_{[0,t]} \mapsto
        \langle ( e_I \shuffle e_0^{\otimes k} ) \otimes e_0,
        \hbbX^\pi_t \rangle :
        \begin{matrix}
          k \in \N_0, \,
          N \in \{0, 1 \},\\
          I \in \{0,\ldots,d\}^N
        \end{matrix}
      \Bigr\}
    \Bigr)\subseteq \cA\nonumber,
  \end{align*}
  is a point-separating and nowhere vanishing vector subspace of $\psi$-moderate growth. The claim follows by combining the arguments of Proposition~\ref{prop:UAT weighted} with those of \cite[Proposition~3.11]{Ceylan2025}, and we therefore omit the details.
\end{proof}

\subsection{Approximation of \texorpdfstring{$L^p$}{-}-functionals via signatures}

We present global universal approximation theorems for $L^p$-functionals. Building on the universal approximation results for weighted function spaces from the previous subsection (Proposition~\ref{prop:UAT weighted} and Proposition~\ref{prop:UAT non-anticipative}), we now additionally impose an integrability condition on the weight function $\psi$, thereby extending the result to $L^p$-functionals.

\begin{theorem}[$L^p$-convergence]\label{thm:Lpmain}
  Let $p\ge1$.
  \begin{enumerate}
    \item[(i)]  Let $(\widehat{\mathcal C}^\pi,\cB(\widehat{\mathcal C}^\pi))$ be a Borel space with finite measure $\nu$. Let $\psi$ be defined as in \eqref{eq: weight function} and $\int_{\widehat{\mathcal C}^\pi}\psi^p\dd\nu<\infty$. Moreover, consider the set
    \begin{equation*}
      \mathcal L:=\Bigl\{f_\ell|~f_\ell\colon \hX^\pi\mapsto \boldsymbol\ell(\hbbX_T^\pi)=\sum_{|I|\le N}\ell_I\langle e_I,\hbbX_T^\pi\rangle,~\ell_I\in\R,~N\in\N_0,~\hX^\pi\in\widehat{\mathcal C}^\pi\Bigr\}.
    \end{equation*}
    Then, for every $f\in L^p(\widehat{\mathcal C}^\pi;\nu)$ and for every $\epsilon>0$, there exists a functional $f_\ell\in\mathcal L$ such that
    \begin{equation*}
      \|f-f_\ell\|_{L^p(\widehat{\mathcal C}^\pi)}<\epsilon.
    \end{equation*}
    \item[(ii)] Let $(\Lambda_T^\pi,\cB(\Lambda_T^\pi))$ be a Borel space with finite measure $\nu$. Let $\psi$ be defined as in \eqref{eq: weight function 2} and $\int_{\Lambda_T^\pi}\psi^p\dd\nu<\infty$. Moreover, consider the set
    \begin{equation*}
      \mathcal L_\Lambda:=\Bigl\{f_\ell|~f_\ell\colon \hX_{[0,t]}^\pi\mapsto \boldsymbol\ell(\hbbX_t^\pi)=\sum_{|I|\le N}\ell_I\langle e_I,\hbbX_t^\pi\rangle,~\ell_I\in\R,~N\in\N_0,~\hX_{[0,t]}^\pi\in\Lambda_T^\pi\Bigr\}.
    \end{equation*}
    Then, for every $f\in L^p(\Lambda_T^\pi;\nu)$ and for every  $\epsilon>0$, there exists a functional $f_\ell\in\mathcal L_\Lambda$ such that
    \begin{equation*}
      \|f-f_\ell\|_{L^p(\Lambda_T^\pi)}<\epsilon.
    \end{equation*}
  \end{enumerate}
\end{theorem}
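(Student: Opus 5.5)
The proof is a two-step density argument, identical for (i) and (ii); we write it for (i), and (ii) follows by replacing $\widehat{\mathcal C}^\pi$, Proposition~\ref{prop:UAT weighted} and \eqref{eq: weight function} with $\Lambda_T^\pi$, Proposition~\ref{prop:UAT non-anticipative} and \eqref{eq: weight function 2}. Fix $f\in L^p(\widehat{\mathcal C}^\pi;\nu)$ and $\epsilon>0$. First we approximate $f$ in $L^p$ by a bounded continuous function $g\in C_b(\widehat{\mathcal C}^\pi)$ with $\|f-g\|_{L^p}<\epsilon/2$. Then we approximate $g$ by a linear signature functional using the weighted universal approximation theorem, and transfer the weighted error to an $L^p$ error through the integrability assumption on $\psi$.

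For the first step we use that $\widehat{\mathcal C}^\pi$ (resp.\ $\Lambda^\pi_T$, by the analogue of \cite[Lemma~A.4]{Ceylan2025}) is a Polish, hence metrizable, space and that $\nu$ is a finite Borel measure. Every finite Borel measure on a metric space is regular: for each Borel set $A$ and $\delta>0$ there are a closed $F\subseteq A$ and an open $U\supseteq A$ with $\nu(U\setminus F)<\delta$. The Urysohn-type function $x\mapsto d(x,U^c)/(d(x,U^c)+d(x,F))$ then approximates $\mathbf 1_A$ in $L^p(\nu)$. Simple functions are dense in $L^p$, so $C_b$ is dense in $L^p(\nu)$. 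This is standard, and we would cite it rather than reprove it. The only point to check is that the topology used here, induced by $d_{\alpha'}$ (resp.\ $d_{\Lambda,\alpha'}$), is the one generating the Borel $\sigma$-algebra, which holds by construction.

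For the second step, $g\in C_b(\widehat{\mathcal C}^\pi)\subseteq\mathcal B_\psi(\widehat{\mathcal C}^\pi)$. Set $\delta:=\frac{\epsilon}{2}\bigl(1+\|\psi\|_{L^p(\nu)}\bigr)^{-1}$, which is finite by the assumption $\int\psi^p\dd\nu<\infty$. Proposition~\ref{prop:UAT weighted} gives $f_\ell\in\mathcal L$ with $|g(\hX^\pi)-f_\ell(\hX^\pi)|\le\delta\,\psi(\hX^\pi)$ for all $\hX^\pi$. Integrating yields
\begin{equation*}
  \|g-f_\ell\|_{L^p(\nu)}\le\delta\,\|\psi\|_{L^p(\nu)}<\epsilon/2 .
\end{equation*}
The same bound also shows $f_\ell\in L^p(\nu)$, and it is measurable because it is continuous on each compact $K_R$ and $\widehat{\mathcal C}^\pi=\bigcup_R K_R$. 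The triangle inequality then gives $\|f-f_\ell\|_{L^p}<\epsilon$.

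The main obstacle is not analytic but bookkeeping. The weighted space is built on the weaker $\alpha'$-topology, while $\psi$ involves the stronger $\alpha$-Hölder norm, so $\psi$ is only lower semicontinuous in the $\alpha'$-topology. We must therefore make sure that $\psi$ is Borel measurable, so that $\int\psi^p\dd\nu$ makes sense, and that the measure $\nu$ and the $C_b$-density are taken with respect to this same $\alpha'$-topology. Lower semicontinuity, which follows from the compactness of the sublevel sets $K_R$ established before Proposition~\ref{prop:UAT weighted} and in Lemma~\ref{lem: weighted space}, gives the required measurability. With this in place, the argument is the same as in the continuous-time case of \cite{Ceylan2025}.
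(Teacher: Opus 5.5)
Your proposal is correct and follows the paper's proof: you approximate $f$ in $L^p(\nu)$ by a function in $C_b$, then apply the weighted universal approximation theorem and convert the weighted error into an $L^p$ error via $\int\psi^p\dd\nu<\infty$. The only difference is cosmetic. You get density of $C_b$ from regularity of finite Borel measures and Urysohn functions, whereas the paper truncates $f$ and uses Lusin's and Tietze's theorems. Your measurability remarks on $\psi$ and $f_\ell$ fill in points the paper leaves implicit.
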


\begin{proof}
  (i) Let $f\in L^p(\widehat{\mathcal C}^\pi;\nu)$. Fix $\epsilon>0$.
  
  \textit{Step~1}: For any $K>0$, we can define the function $f_K(x):=1_{\{|f(x)|\le K\}}(x)f(x)$ for which we have $\|f-f_K\|_{L^p(\widehat{\mathcal C}^\pi)}\to 0$ as $K\to \infty$ by dominated convergence. Therefore, there is a $K^{\epsilon}>0$ such that $\|f-f_{K^{\epsilon}}\|_{L^p(\widehat{\mathcal C}^\pi)}\le\frac{\epsilon}{3}$.
  
  \textit{Step~2}: By Lusin's theorem \cite[Theorem~2.5.17]{Denkowski2003}, there is a closed set $C^\epsilon\subset \widehat{\mathcal C}^\pi$, such that $f_{K^\epsilon}$ restricted to $C^\epsilon$ is continuous and $\nu(\widehat{\mathcal C}^\pi\setminus C^\epsilon)\le \frac{\epsilon^p}{(6 K^\epsilon)^p}$. By Tietze's extension theorem \cite[Theorem~3.6.3]{friedman1982}, there is a continuous extension $f^\epsilon\in C_b(\widehat{\mathcal C}^\pi,[-K^\epsilon,K^\epsilon])$ of $f_{K^\epsilon},$ such that
  \begin{equation*}
    \|f_{K^\epsilon}-f^\epsilon\|_{L^p(\widehat{\mathcal C}^\pi)}^p=\int_{\widehat{\mathcal C}^\pi\setminus C^\epsilon}|f_{K^\epsilon}-f^\epsilon|^p\dd\nu\le(2K^\epsilon)^p\nu(\widehat{\mathcal C}^\pi\setminus C^\epsilon)\le \Bigl(\frac{\epsilon}{3}\Bigr)^p.
  \end{equation*}

  \textit{Step~3}: Moreover, since by the definition of the weighted function space $ \cB_\psi(\widehat{\mathcal C}^\pi)$ it holds that $C_b(\widehat{\mathcal C}^\pi)\subseteq \cB_\psi(\widehat{\mathcal C}^\pi)$, by Proposition~\ref{prop:UAT weighted}  we can approximate $f^\epsilon$ by a linear combination of the signature. More precisely, setting $M=\int_{\widehat{\mathcal C}^\pi}\psi^p\dd\nu<\infty$ we have
  \begin{equation*}
    \|{f}^\epsilon-f_\ell\|^p_{\cB_\psi(\widehat{\mathcal C}^\pi)}=\Bigl(\sup_{\hX^\pi\in\widehat{\mathcal C}^\pi}\frac{|{f}^\epsilon(\hX^\pi)-\boldsymbol{\ell}(\hbbX_T^\pi)|}{\psi(\hX^\pi)}
    \Bigr)^p< \frac{\epsilon^p}{3^p M}.
  \end{equation*}
  Hence, we get
  \begin{equation*}
     \|{f}^\epsilon-f_\ell\|^p_{L^p(\widehat{\mathcal C}^\pi)}\le\int_{\widehat{\mathcal C}^\pi}\psi^p\dd\nu~\|{f}^\epsilon-f_\ell\|^p_{\cB_\psi(\widehat{\mathcal C}^\pi)}< \Bigl(\frac{\epsilon}{3}
        \Bigr)^p.
  \end{equation*}
  Altogether, we obtain
  \begin{equation*}
    \|f-f_\ell\|_{L^p(\widehat{\mathcal C}^\pi)}\le\|f-f_{K^\epsilon}\|_{L^p(\widehat{\mathcal C}^\pi)}+\|f_{K^\epsilon}-f^\epsilon\|_{L^p(\widehat{\mathcal C}^\pi)}+
     \|{f}^\epsilon-f_\ell\|_{L^p(\widehat{\mathcal C}^\pi)}< \epsilon,
  \end{equation*}
  which concludes the proof.
  
  (ii) The proof follows along the same lines as in (i), where in \textit{Step~3}, we use Proposition~\ref{prop:UAT non-anticipative}.
\end{proof}

\begin{remark}\label{rem: other Lp results}
  Note that the assumption $\int_{\widehat{\mathcal C}^\pi} \psi^p \dd\nu < \infty$, with $\psi(\hX^\pi) = \exp\Big(\beta\|\hX^\pi\|_{\alpha}^\gamma\Big)$, represents an exponential moment condition. Such a condition is crucial for establishing \(L^p\)-convergence with the method developed here.

  While our analysis is carried out in a discrete-time setting, several recent works establish $L^p$-universal approximation theorems on spaces of weakly geometric rough paths. In~\cite{Ceylan2025}, a similar $L^p$-universal approximation theorem is derived within the framework of the geometric $\alpha$-H\"older rough path space, assuming the $p$-integrability of the weight function $\psi(\hbX)=\exp(\beta\|\hbX\|_{cc,\alpha}^\gamma)$ on rough path space. Despite the similarity of the two settings, our discrete-time results do not follow directly from \cite{Ceylan2025}. Indeed, while the $L^p$-approximation result can formally be transferred via pushforward arguments together with the canonical embedding of the space of piecewise linearly interpolated paths into the space of geometric $\alpha$-H\"older rough paths $\widehat C_{d,T}^\alpha$, this argument does not extend to the weighted setting. The reason is that the weighted universal approximation theorem requires the corresponding functional to belong to the weighted space $\cB_\psi(\widehat{C}_{d,T}^\alpha)$, which entails uniform control in the weighted supremum norm induced by the weight function $\psi$. After composition with the canonical embedding map, however, such control is obtained only on the image of the embedding. In general, this does not imply that the induced functional remains controlled by the weight function on the entire space $\widehat{C}_{d,T}^\alpha$. Consequently, the weighted structure is not preserved when passing from continuous to discretely generated path spaces, and a separate discrete-time analysis is required.
  
   We also note that $L^p$-approximation results on the geometric rough path space have been obtained under a different type of assumptions, namely under an infinite radius of convergence condition, see \cite{Chevyrev2026}. Similarly, there are $L^p$-type results for so-called robust signatures, which were introduced in \cite{Chevyrev2022}. However, compared to our approach in \cite{Bayer2025} they use a Stone--Weierstraß argument for robust signatures, and in \cite{Schell2023} they use a monotone class argument.
\end{remark}

\section{Universal approximation via discrete-time signatures of Gaussian processes}\label{sec: applications}

In this section, we show that the piecewise linear interpolation of a Gaussian process satisfies the required integrability condition, which yields $L^p$-approximation results for measurable and continuous Gaussian path functionals by linear functionals of the signature of its piecewise linear interpolation. In particular, this holds for (fractional) Brownian motion. Moreover, we discuss convergence rates for piecewise linear approximations of Gaussian signatures.

\medskip

Throughout this section, let $X=(X_t)_{t\in[0,T]}$ be a $d$-dimensional continuous, centred Gaussian process with independent components defined on a probability space $(\Omega,\mathcal F,\P)$, with a filtration $(\cF_t)_{t\in[0,T]}$, satisfying the usual conditions, i.e., completeness and right-continuity.
We denote by
\begin{align*}
  R_X\colon[0,T]^2&\to\R^{d\times d}\\
  (s,t)&\mapsto\E[X_s\otimes X_t],
\end{align*}
the covariance function of $X$. For the $i$-th component of $X$, we write $R_{X^i}:=\E[X_s^iX_t^i]$, $0\le s,t\le T$. In what follows, we assume that there exists a $\varrho\in[1,2)$ and $M<\infty$ such that, for every $i\in\{1,\ldots,d\}$ and all $0\le s\le t\le T$,
\begin{equation*}
  \|R_{X^i}\|_{\varrho;[s,t]^2}\le M|t-s|^{1/\varrho},
\end{equation*}
where $\|\cdot\|_{\varrho;I\times I^\prime}$ denotes the two-dimensional $\varrho$-variation on a rectangle $I\times I^\prime$ as defined in \cite[(10.5)]{Friz2020}.

Moreover, we consider a nested sequence of partitions $\pi_n = \{t_i^n\}_{i=0}^{N_n}$ of $[0,T]$ with vanishing mesh size, i.e. $\pi_n \subset \pi_{n+1}$ and $|\pi_n| \to 0$ as $n \to \infty$, and denote by $X^{\pi_n}$ the piecewise linear interpolation of $X$ along $\pi_n$.

\subsection{Approximation of path functionals}\label{subsec: Gaussian processes}

We apply the universal approximation result of Theorem~\ref{thm:Lpmain} to stochastic path functionals of Gaussian processes.
More precisely, we show that measurable and continuous (possibly non-anticipative) functionals of a Gaussian path can be approximated in 
$L^p$ by linear functionals of the signature of its piecewise linear interpolation.

\begin{proposition}
  Let $p\ge 1$, $\varrho\in[1,2)$ and let $\alpha\in (\frac{1}{4},\frac{1}{2\varrho})$. Let $X$ be a $d$-dimensional continuous, centred Gaussian process with independent components and covariance $R_X$ such that there exists a $\varrho\in[1,2)$ and $M<\infty$ such that, for every $i\in\{1,\ldots,d\}$ and all $0\le s\le t\le T$,
  \begin{equation*}
    \|R_{X^i}\|_{\varrho;[s,t]^2}\le M|t-s|^{1/\varrho}.
  \end{equation*}
  Fix $n\in\N$ and let $X^{\pi_n}$ denote the piecewise linear interpolation of $X$ along $\pi_n$. Set $\hX_t:=(t,X_t)$ and $\hX^{\pi_n}_t:=(t,X^{\pi_n}_t)$, $t\in[0,T]$.
  \begin{enumerate}
    \item[(i)] Let  $f(\hX^{\pi_n})\in L^p(\Omega)$ with $f\colon \widehat{\mathcal C}^{\pi_n}\to\R$. Then for every $\epsilon>0$ there exists a linear functional $\boldsymbol\ell\colon T((\R^{d+1}))\to\R$ of the form $\hbbX^{\pi_n}_T\mapsto\sum_{|I|\le N}\ell_I\langle e_I,\hbbX^{\pi_n}_T\rangle$, for some $N\in\N_0$ and $\ell_I\in\R$, such that
    \begin{equation*}
      \E[|f(\hX^{\pi_n})-\boldsymbol\ell(\hbbX_T^{\pi_n})|^p]<\epsilon^p.
    \end{equation*}
    \item[(ii)] Let $f\in L^p(\Lambda_T^{\pi_n})$. Then for every $\epsilon>0$ there exists a linear functional $\boldsymbol\ell\colon T((\R^{d+1}))\to\R$ of the form $\hbbX_t^{\pi_n}\mapsto\sum_{|I|\le N}\ell_I\langle e_I,\hbbX^{\pi_n}_t\rangle$, for some $N\in\N_0$ and $\ell_I\in\R$, such that
    \begin{equation*}
      \E\Bigl[\int_0^T \big|f(\hX^{\pi_n}_{[0,t]})-\boldsymbol\ell(\hbbX^{\pi_n}_t)\big|^p\dd t\Bigr] < \epsilon^p.
    \end{equation*}
  \end{enumerate}
\end{proposition}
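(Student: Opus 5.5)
The plan is to reduce both statements to Theorem~\ref{thm:Lpmain} by pushing forward $\P$ under the interpolation map. For (i), let $\nu:=\P\circ(\hX^{\pi_n})^{-1}$ on $(\widehat{\mathcal C}^{\pi_n},\cB(\widehat{\mathcal C}^{\pi_n}))$. This is a probability measure, since $\omega\mapsto\hX^{\pi_n}(\omega)$ is measurable: it depends continuously on the finitely many values $X_{t_i^n}$. Then $f\in L^p(\widehat{\mathcal C}^{\pi_n};\nu)$ is equivalent to $f(\hX^{\pi_n})\in L^p(\Omega)$, and $\E[|f(\hX^{\pi_n})-\boldsymbol\ell(\hbbX^{\pi_n}_T)|^p]=\|f-f_\ell\|^p_{L^p(\nu)}$. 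For (ii), I take $\nu:=(\P\otimes \mathrm{Leb}_{[0,T]})\circ\phi(\cdot,\hX^{\pi_n})^{-1}$ on $\Lambda^{\pi_n}_T$, using the continuous quotient map $\phi$ from \eqref{eq: quotient map}. This measure is finite with mass $T$, and $\E\int_0^T|g(\hX^{\pi_n}_{[0,t]})|^p\dd t=\int|g|^p\dd\nu$. Everything therefore reduces to checking the exponential moment condition $\int\psi^p\dd\nu<\infty$ for some $\beta>0$ and $\gamma\ge1$.

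For that condition, I choose $\gamma=1$ and use H\"older exponent $\alpha\in(\frac14,\frac1{2\varrho})$. In case (ii), the stopped path satisfies $\|\hX^{\pi_n,t}\|_\alpha\le\|\hX^{\pi_n}\|_\alpha$, so $\psi\le\exp(\beta\|\hX^{\pi_n}\|_\alpha)$ and both cases reduce to
\begin{equation*}
  \E\bigl[\exp(p\beta\|\hX^{\pi_n}\|_\alpha)\bigr]<\infty .
\end{equation*}
The time component contributes at most $T^{1-\alpha}$, so it suffices to control $\|X^{\pi_n}\|_\alpha$. Since $n$ is fixed and $X^{\pi_n}$ is piecewise linear with finitely many nodes, I expect the bound
\begin{equation*}
  \|X^{\pi_n}\|_\alpha\le C_{n}\max_i|X_{t_{i+1}^n}-X_{t_i^n}|,
\end{equation*}
where $C_n$ depends on the minimal mesh of $\pi_n$. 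I can also bound it by $3\|X\|_\alpha$ using the standard fact that linear interpolation does not increase the H\"older seminorm by more than a constant factor. The simplest route is the former: $\max_i|X_{t_{i+1}^n}-X_{t_i^n}|$ is a maximum of finitely many centred Gaussian vectors. It therefore has Gaussian tails, and in particular all exponential moments $\E[\exp(\lambda\max_i|\Delta_iX|)]$ are finite for every $\lambda$, since $\E e^{\lambda|G|}<\infty$ for Gaussian $G$.

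The step I expect to be the main obstacle is making this moment bound meaningful and uniform rather than merely $n$-dependent. If one wants a constant independent of $n$, or wants to use the covariance assumption as the statement suggests, I would instead invoke the Gaussian rough path theory in \cite{Friz2010}. The covariance condition $\|R_{X^i}\|_{\varrho;[s,t]^2}\le M|t-s|^{1/\varrho}$ yields, for piecewise linear approximations, uniform Fernique-type estimates $\sup_n\E[\exp(\eta\|\bX^{\pi_n}\|_{cc,\alpha}^2)]<\infty$ for the lifted paths with $\alpha<\frac1{2\varrho}$. Since $\alpha>\frac14$, the lift is of step $\lfloor1/\alpha\rfloor\le3$, and this is where $H>\frac14$ enters. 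The first level of this bound dominates $\|X^{\pi_n}\|_\alpha$, and Gaussian-square integrability implies all exponential moments of order one. Either way, $\int\psi^p\dd\nu<\infty$ for every $\beta>0$, and Theorem~\ref{thm:Lpmain}(i) and (ii) give the desired $\boldsymbol\ell$.
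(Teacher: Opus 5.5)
Your proof is correct. It shares the paper's reduction to Theorem~\ref{thm:Lpmain}, via the push-forward measures $\P\circ(\hX^{\pi_n})^{-1}$ and $(\d t\otimes\mu_{\hX^{\pi_n}})\circ\phi^{-1}$, but verifies the exponential moment condition by a different and more elementary argument.

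\emph{The paper's route.} It controls the covariance of $X^{\pi_n}$ by \cite[Proposition~15.11]{Friz2010}. It then applies the Fernique-type bound \cite[Corollary~15.29]{Friz2010} to the step-$3$ lift $\hbbX^{\pi_n,3}$. Finally it dominates $\|\hX^{\pi_n}\|_\alpha$ by $C_1\|\hbbX^{\pi_n,3}\|_{cc,\alpha}$, which leads to $\gamma=2$ with $\beta\le\eta/(C_1^2p)$.

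\emph{Your route.} You use the bound $\|X^{\pi_n}\|_\alpha\le T^{1-\alpha}\delta_n^{-1}\max_i|X_{t^n_{i+1}}-X_{t^n_i}|$, where $\delta_n$ is the minimal mesh of $\pi_n$. You combine it with the fact that a maximum of finitely many Gaussian vectors has exponential moments of all orders. This is valid. For fixed $n$ it needs neither the covariance condition, nor independent components, nor $\alpha\in(\frac14,\frac1{2\varrho})$. It gives $\int\psi^p\dd\nu<\infty$ for every $\alpha\in(0,1]$, every $\beta>0$ and every $\gamma<2$.

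\emph{What each buys.} The paper's route gives a bound uniform in $n$, with $\beta$ independent of the partition even for $\gamma=2$. It also ties directly to the Gaussian rough path lift used in Subsection~\ref{subsec: approximation error}. As you note, this uniformity is not needed for the statement at hand. Proposition~\ref{prop: approx Gaussian functionals} likewise applies Theorem~\ref{thm:Lpmain} one $n$ at a time.

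Two small points. First, the inequality $\|\hX^{\pi_n,t}\|_\alpha\le\|\hX^{\pi_n}\|_\alpha$ can fail for the Euclidean norm on $\R^{d+1}$. The reason is that the stopped spatial increment is paired with an unstopped time increment; the paper's identity $\sup_t\|\hX^{\pi_n,t}\|_\alpha=\|\hX^{\pi_n}\|_\alpha$ has the same defect. What does hold is $\|\hX^{\pi_n,t}\|_\alpha\le T^{1-\alpha}+\|X^{\pi_n}\|_\alpha$, and that is all your argument uses. Second, $\gamma=1$ is permitted by \eqref{eq: weight function}, but it is the borderline case in the moderate-growth step of Proposition~\ref{prop:UAT weighted}. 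There $\exp(\lambda|\tilde a|)\in\cB_\psi$ is obtained only for $\lambda<\beta/C$, whereas for $\gamma>1$ it holds for every $\lambda$. Since your Gaussian-tail bound covers every $\gamma<2$, you lose nothing by taking $\gamma\in(1,2)$ instead.
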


\begin{proof}
  (i) We apply Theorem~\ref{thm:Lpmain}. To do so, it remains to verify the exponential moment condition for the law $\nu_n:=\mu_{\hX^{\pi_n}}$, i.e. $\E[\exp(p\beta \|\hX^{\pi_n}\|_\alpha^\gamma)]<\infty$ for $\gamma\ge 1$ and $\beta>0$. 

  To that end, first observe that, by \cite[Proposition~15.11]{Friz2010} the covariance of $X^{\pi_n}$ satisfies for every $i\in\{1,\ldots,d\}$ and $0\le s\le t\le T$
  \begin{equation*}
    \|R_{(X^i)^{\pi_n}}\|_{\varrho;[s,t]^2}\le C\|R_{X^i}\|_{\varrho;[s,t]^2}\le CM|t-s|^{1/\varrho},
  \end{equation*}
  with some constant $C=C(\varrho)>0$. Then by \cite[Corollary~15.29]{Friz2010}, there exists $\eta=\eta(C,M,T,\alpha,\varrho)>0$ such that
  \begin{equation*}
  \E[\exp(\eta \|\hbbX^{\pi_n,3}\|_{cc,\alpha}^2)]<\infty.
  \end{equation*}
  Moreover, as mentioned in the proof of Proposition~\ref{prop:UAT weighted}, on $G^1(\R^{d+1})$ the norm $|\cdot|_{\R^{d+1}}$ is equivalent to the norm $\|\cdot\|_{cc}$ for some constant $C_1>0$. Using this, we observe that
  \begin{align}\label{eq: estimate linear interpolation}
    \|\hX^{\pi_n}\|_\alpha&=\sup_{0\le s<t\le T}\frac{|\hX^{\pi_n}_{s,t}|_{\R^{d+1}}}{|t-s|^\alpha}\le C_1\sup_{0\le s<t\le T}\frac{\|\hbX^{\pi_n}_{s,t}\|_{cc}}{|t-s|^\alpha}\le C_1\sup_{0\le s<t\le T}\frac{\|\hbbX^{\pi_n,3}_{s,t}\|_{cc}}{|t-s|^\alpha}.
  \end{align}
  
  This yields, that the exponential moment condition is fulfilled if we consider the linear interpolation of a Gaussian process, that is,
  \begin{equation}\label{eq: integrability verification}
    \int_{\widehat{\mathcal C}^{\pi_n}}\psi^p\dd\nu_n=\E\Bigl[\exp\Bigl(\beta p\|\hX^{\pi_n}\|_{\alpha}^2\Bigr)\Bigr]\le\E\Bigl[\exp\Bigl(C_1^2\beta p\| \hbbX^{\pi_n,3}\|_{cc,\alpha}^2\Bigr)\Bigr]<\infty,
  \end{equation}
  by taking $\beta\in (0,\frac{\eta}{C_1^2 p}],$ where $C_1>0$ as in \eqref{eq: estimate linear interpolation}. Further, since $f(\hX^{\pi_n})\in L^p(\Omega)$,
  \begin{equation*}
    \int_{\widehat{\mathcal C}^{\pi_n}}|f|^p\dd\nu_n=\E[|f(\hX^{\pi_n})|^p]<\infty,
  \end{equation*}
  that is, $f\in L^p(\widehat{\mathcal C}^{\pi_n})$. Hence, Theorem~\ref{thm:Lpmain} yields that there exists a functional $f_\ell\in\mathcal L$ such that $\|f-f_\ell\|_{L^p(\widehat{\mathcal C}^{\pi_n})}<\epsilon$,
  and, therefore,
  \begin{equation*}
    \E[|f(\hX^{\pi_n})-\boldsymbol\ell(\hbbX^{\pi_n}_T)|^p]=\|f-f_\ell\|^p_{L^p(\widehat{\mathcal C}^{\pi_n})}<\epsilon^p.
  \end{equation*}

  (ii) Let $\nu_n:=(dt\otimes\mu_{\hX^{\pi_n}})\circ\phi^{-1}$ be the push-forward measure on $(\Lambda_T^{\pi_n},\mathcal B(\Lambda_T^{\pi_n}))$, where $\phi(t,\hX^{\pi_n})=\hX^{\pi_n}_{[0,t]}$ is the quotient map as given in \eqref{eq: quotient map}.

  Note that, by the continuity of $\phi$, $\nu_n$ is then a well-defined Borel measure. Moreover, by a change of measure result, we observe that
  \begin{align}
    \int_{\Lambda_T^{\pi_n}}\psi^p\dd\nu_n&=\int_{\widehat{\mathcal C}^{\pi_n}}\int_0^T (\psi\circ\phi)(t,\hX^{\pi_n})\dd t\dd\mu_{\hX^{\pi_n}}\nonumber\\
    &=\E\Bigl[\int_0^T\psi(\hX^{\pi_n}_{[0,t]})^p\dd t\Bigr]\nonumber\\
    &=\E\Bigl[\int_0^T\exp(p\beta\|\hX^{\pi_n,t}_{[0,T]}\|_\alpha^\gamma)\dd t\Bigr]\nonumber\\
    &\le T \E\Bigl[\sup_{t\in[0,T]}\exp(p\beta\|\hX^{\pi_n,t}_{[0,T]}\|_\alpha^\gamma)\Bigr]\label{eq: integrability verification 2}\\
    &=T \E\Bigl[\exp(p\beta\|\hX^{\pi_n}_{[0,T]}\|_\alpha^\gamma)\Bigr]\nonumber\\
    &\le T\E\Bigl[\exp\Bigl(C_1^\gamma\beta p\| \hbbX^{\pi_n,3}\|_{cc,\alpha}^\gamma\Bigr)\Bigr]<\infty\nonumber,
  \end{align}
  for $\beta\in (0,\frac{\eta}{C_1^\gamma p}]$ and $\gamma=2$, where $C_1>0$ as in \eqref{eq: estimate linear interpolation} and where we used
  \begin{equation*}
    \sup_{t\in[0,T]}\|\hX^{\pi_n,t}_{[0,T]}\|_\alpha=\|\hX^{\pi_n}_{[0,T]}\|_\alpha.
  \end{equation*}

  Thus, the integrability condition given in Theorem~\ref{thm:Lpmain} is satisfied, and therefore there exists a functional $f_\ell\in\mathcal L_\Lambda$ such that
  \begin{equation*}
    \|f-f_\ell\|_{L^p(\Lambda_T^{\pi_n})}^p< \epsilon^p.
  \end{equation*}
  Thus, there exists a linear functional $\boldsymbol\ell$ on the signature of $\hX^{\pi_n}$ such that
  \begin{equation*}
    \E\Bigl[\int_0^T|f(\hX^{\pi_n}_{[0,t]})-\boldsymbol\ell(\hbbX_t^{\pi_n})|^p\dd t\Bigr]=\int_{\widehat{\mathcal C}^{\pi_n}}\int_0^T|f-f_\ell|^p\dd t\dd\mu_{\hX^{\pi_n}}= \|f-f_\ell\|_{L^p(\Lambda_T^{\pi_n})}^p< \epsilon^p.
  \end{equation*}
\end{proof}

\begin{proposition}\label{prop: approx Gaussian functionals}
  Let $p\ge 1$, $\varrho\in[1,2)$ and $\alpha\in (\frac{1}{4},\frac{1}{2\varrho})$. Let $X$ be a $d$-dimensional continuous, centred Gaussian process with independent components and covariance $R_X$ such that there exists a $\varrho\in[1,2)$ and $M<\infty$ such that, for every $i\in\{1,\ldots,d\}$ and all $0\le s\le t\le T$,
  \begin{equation*}
    \|R_{X^i}\|_{\varrho;[s,t]^2}\le M|t-s|^{1/\varrho}.
  \end{equation*}
  Let $X^{\pi_n}$ denote the piecewise linear interpolation of $X$ along $\pi_n$. Set $\hX_t:=(t,X_t)$ and $\hX^{\pi_n}_t:=(t,X^{\pi_n}_t)$, $t\in[0,T]$.
  \begin{enumerate}
    \item[(i)] Let $f\colon C([0,T];\R^{d+1})\to\R$ be a measurable and continuous functional. Assume\\ $f(\hX^{\pi_n})\in L^p(\Omega)$ for all $n\in\N$ and that $(|f(\hX^{\pi_n})|^p)_{n\in\N}$ is uniformly integrable. Then, for every $\epsilon>0$ there exists $n_0\in\N$ such that for all $n\ge n_0$ there exists a linear functional $\boldsymbol\ell\colon T((\R^{d+1}))\to\R$ of the form $\hbbX_T^{\pi_n}\mapsto\sum_{|I|\le N}\ell_I\langle e_I,\hbbX^{\pi_n}_T\rangle$, for some $N\in\N_0$ and $\ell_I\in\R$, such that
    \begin{equation*}
      \E[|f(\hX)-\boldsymbol\ell(\hbbX_T^{\pi_n})|^p]<\epsilon^p.
    \end{equation*}
  
    \item[(ii)] Let $f\colon [0,T]\times C([0,T];\R^{d+1})\to\R $ be a measurable, continuous, non-anticipative functional. Assume $f(t,\hX^{\pi_n})\in L^p([0,T]\times \Omega)$ for all $n\in\N$ and that $(|f(t,\hX^{\pi_n})|^p)_{n\in\N}$ is uniformly integrable. Then, for every $\epsilon>0$ there exists $n_0\in\N$ such that for all $n\ge n_0$ there exists a linear functional $\boldsymbol\ell\colon T((\R^{d+1}))\to\R$ of the form $\hbbX_t^{\pi_n}\mapsto\sum_{|I|\le N}\ell_I\langle e_I,\hbbX^{\pi_n}_t\rangle$, for some $N\in\N_0$ and $\ell_I\in\R$, such that
    \begin{equation*}
      \E\Bigl[\int_0^T \big|f(t,\hX)-\boldsymbol\ell(\hbbX^{\pi_n}_t)\big|^p\dd t\Bigr] < \epsilon^p.
    \end{equation*}
  \end{enumerate}
\end{proposition}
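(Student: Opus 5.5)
The plan is a two-step triangle-inequality argument:
\begin{equation*}
  \|f(\hX)-\boldsymbol\ell(\hbbX^{\pi_n}_T)\|_{L^p(\Omega)}\le \|f(\hX)-f(\hX^{\pi_n})\|_{L^p(\Omega)}+\|f(\hX^{\pi_n})-\boldsymbol\ell(\hbbX^{\pi_n}_T)\|_{L^p(\Omega)}.
\end{equation*}
The first term is a discretisation error, made small by choosing $n\ge n_0$. The second term is then, for each such fixed $n$, made smaller than $\epsilon/2$ by the preceding proposition (part (i)). That proposition applies because $f$ restricted to $\widehat{\mathcal C}^{\pi_n}$ is a measurable map with $f(\hX^{\pi_n})\in L^p(\Omega)$ by assumption. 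The linear functional $\boldsymbol\ell$ will therefore depend on $n$, which is consistent with the order of quantifiers in the statement.

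For the first term, I first show $\hX^{\pi_n}\to\hX$ in probability (even almost surely) in $C([0,T];\R^{d+1})$ with the uniform topology. The time component is untouched by interpolation. For the $X$-component, continuity of sample paths on the compact interval $[0,T]$ gives uniform continuity pathwise, hence
\begin{equation*}
  \sup_t|X^{\pi_n}_t-X_t|\le 2\,\omega_X(|\pi_n|)\to0 \quad\text{a.s.},
\end{equation*}
where $\omega_X$ is the modulus of continuity. Since $f$ is continuous on $C([0,T];\R^{d+1})$, it follows that $f(\hX^{\pi_n})\to f(\hX)$ almost surely. Uniform integrability of $(|f(\hX^{\pi_n})|^p)_n$ then gives, by Fatou, $f(\hX)\in L^p$. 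Moreover, $|f(\hX^{\pi_n})-f(\hX)|^p\le 2^{p-1}(|f(\hX^{\pi_n})|^p+|f(\hX)|^p)$ is uniformly integrable, so Vitali's convergence theorem yields $\E[|f(\hX^{\pi_n})-f(\hX)|^p]\to0$. Choosing $n_0$ so that this is below $(\epsilon/2)^p$ and combining with the second term via Minkowski gives a total error below $\epsilon$, which is the strict bound $<\epsilon^p$ after raising to the $p$-th power.

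For (ii) I run the same argument on the product space $[0,T]\times\Omega$ with measure $\dd t\otimes\P$. Non-anticipativity means $f(t,\hX)=f(t,\hX^{t})$, which depends only on $\hX_{[0,t]}$. The function $t\mapsto f(t,\hX^{\pi_n})$ is then a functional on $\Lambda^{\pi_n}_T$, and part (ii) of the preceding proposition supplies $\boldsymbol\ell$ with small error in $L^p(\dd t\otimes\P)$. Note that in that proposition $f\in L^p(\Lambda^{\pi_n}_T)$ is with respect to the pushforward $\nu_n=(\dd t\otimes\mu_{\hX^{\pi_n}})\circ\phi^{-1}$, so this is exactly our integrability hypothesis. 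For the discretisation term, continuity of $f$ on $[0,T]\times C$ and the uniform convergence of the paths give $f(t,\hX^{\pi_n})\to f(t,\hX)$ for every $t$, almost surely. Uniform integrability on $[0,T]\times\Omega$ plus Vitali again give $L^p$ convergence.

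The main obstacle is making the transfer to the preceding proposition rigorous. I need to check that $f$ restricted to $\widehat{\mathcal C}^{\pi_n}$ (respectively the induced map on $\Lambda^{\pi_n}_T$) is Borel with respect to the $\alpha'$-Hölder topologies used there. This follows because the inclusion $\widehat{\mathcal C}^{\pi_n}\hookrightarrow C([0,T];\R^{d+1})$ is continuous, as Hölder convergence implies uniform convergence. For stopped paths, the identification $\hX^{\pi_n}_{[0,t]}\mapsto(t,\hX^{\pi_n,t})$ is continuous for $d_{\Lambda,\alpha'}$. The remaining steps are routine limit arguments.
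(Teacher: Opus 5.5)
Your proposal is correct and follows essentially the same route as the paper. You split the error with Minkowski into a discretisation error and an approximation error. The discretisation error is controlled by almost sure uniform convergence of the interpolations, continuity of $f$ and uniform integrability; your Fatou--Vitali argument replaces the paper's citation of Kallenberg's Proposition~4.12. The approximation error for fixed $n$ is handled the same way in both: the paper applies Theorem~\ref{thm:Lpmain} with the exponential moment bounds verified in the preceding proposition, which is what you obtain by invoking that proposition directly. Your explicit check that the restriction of $f$ to $\widehat{\mathcal C}^{\pi_n}$ (resp.\ the induced map on $\Lambda_T^{\pi_n}$) is Borel is a useful detail the paper leaves implicit.
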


\begin{proof}
  (i) Fix $\epsilon>0$. Since $X$ has continuous paths and $X^{\pi_n}$ is the piecewise linear interpolation on a partition with $|\pi_n|\to0$, we have $\sup_{t\in[0,T]}|\hX_t^{\pi_n}-\hX_t|\to0$ a.s.~as $n\to\infty$. By continuity of $f$, it follows that $f(\hX^{\pi_n})\to f(\hX)$ a.s., hence also in probability. Since $(|f(\hX^{\pi_n})|^p)_{n\in\N}$ is uniformly integrable and $f(\hX^{\pi_n})\to f(\hX)$ in probability, \cite[Proposition~4.12]{Kallenberg2002} implies that there exists $n_0\in\N$ such that, for all $n\ge n_0$,
  \begin{equation*}
    \E\bigl[|f(\hX)-f(\hX^{\pi_n})|^p\bigr]<\Bigl(\frac{\epsilon}{2}\Bigr)^p.
  \end{equation*}
  Fix $n\ge n_0$ and let $\nu_n:=\mu_{\hX^{\pi_n}}$ denote the law of $\hX^{\pi_n}$ on $(\widehat{\mathcal C}^{\pi_n},\mathcal B(\widehat{\mathcal C}^{\pi_n}))$. Then,
  \begin{equation*}
    \int_{\widehat{\mathcal C}^{\pi_n}}|f|^p\dd\mu_{\hX^{\pi_n}}
    =\E[|f(\hX^{\pi_n})|^p]<\infty,
  \end{equation*}  
  which yields that $f\in L^p(\widehat{\mathcal C}^{\pi_n})$.
  By \eqref{eq: integrability verification}, the exponential moment condition is satisfied by the law of $\hX^{\pi_n}$. Therefore, Theorem~\ref{thm:Lpmain} yields that there exists a functional $f_\ell\in\mathcal L$ such that
  \begin{equation*}
    \|f-f_\ell\|_{L^p(\widehat{\mathcal C}^{\pi_n})}^p<\Bigl(\frac \epsilon 2\Bigr)^p.
  \end{equation*}
  Moreover, there exists a linear functional $\boldsymbol\ell$ on the signature of $\hX^{\pi_n}$ such that
  \begin{equation*}
    \E[|f(\hX^{\pi_n})-\boldsymbol\ell(\hbbX_T^{\pi_n})|^p]<\Bigl(\frac \epsilon 2\Bigr)^p,
  \end{equation*}
  since
  \begin{equation*}
    \E[|f(\hX^{\pi_n})-\boldsymbol\ell(\hbbX_T^{\pi_n})|^p]=\int_{\widehat{\mathcal C}^{\pi_n}}|f-f_\ell|^p\dd\mu_{\hX^{\pi_n}}=\|f-f_\ell\|_{L^p(\widehat{\mathcal C}^{\pi_n})}^p<\Bigl(\frac \epsilon 2\Bigr)^p.
  \end{equation*}
  Altogether, using the Minkowski inequality, we obtain
  \begin{align*}
    \E[|f(\hX)-\boldsymbol\ell(\hbbX^{\pi_n}_T)|^p]^{\frac 1 p}
    \le \E[|f(\hX)-f(\hX^{\pi_n})|^p]^{\frac 1 p}+\E[|f(\hX^{\pi_n})-\boldsymbol\ell(\hbbX^{\pi_n}_T)|^p]^{\frac 1 p}
    <\epsilon,
  \end{align*}
  for $n$ large enough. This yields the claim.
  
  \medskip
  (ii) Fix $\epsilon>0$. As above, $\sup_{u\in[0,T]}|\hX_u^{\pi_n}-\hX_u|\to 0$ a.s.~as $n\to\infty$. By continuity of $f$ it follows that $f(\cdot,\hX^{\pi_n})\to f(\cdot,\hX)$ $(\d t\otimes\P)$-a.s., hence in $(\d t\otimes\P)$-measure. Since $(|f(t,\hX^{\pi_n})|^p)_{n\in\N}$ is uniformly integrable and $f(t,\hX^{\pi_n})\to f(t,\hX)$ in $(\d t\otimes\P)$-measure, by \cite[Proposition~4.12]{Kallenberg2002} there exists $n_0\in\N$ such that for all $n\ge n_0$,
  \begin{equation*}
    \E\Bigl[\int_0^T|f(t,\hX)-f(t,\hX^{\pi_n})|^p\dd t\Bigr]<\Bigl(\frac{\epsilon}{2}\Bigr)^p.
  \end{equation*}
  Fix $n\ge n_0$. Let $\nu_n:=(\d t\otimes\mu_{\hX^{\pi_n}})\circ\phi^{-1}$ be the push-forward measure on $(\Lambda_T^{\pi_n},\mathcal B(\Lambda_T^{\pi_n}))$, where $\phi(t,\hX^{\pi_n})=\hX^{\pi_n}_{[0,t]}$ is the quotient map as given in \eqref{eq: quotient map}.

  Since $f$ is non-anticipative, it factors through $\phi$, i.e.~there exists a measurable $F\colon\Lambda_T^{\pi_n}\to\R$ such that $f = F\circ \phi$, then $F(\hX_{[0,t]}^{\pi_n})=f(t,\hX^{\pi_n})$. Then, we have that $F\in L^p(\Lambda_T^{\pi_n};\nu_n)$, since
  \begin{equation*}
    \int_{\Lambda_T^{\pi_n}}|F|^p\dd\nu_n
    =\int_{\widehat{\mathcal C}^{\pi_n}}\int_0^T|(F\circ\phi)(t,\hX^{\pi_n})|^p\dd t\dd\mu_{\hX^{\pi_n}}
    =\E\Bigl[\int_0^T|f(t,\hX^{\pi_n})|^p\dd t\Bigr]<\infty.
  \end{equation*}
  Moreover, by \eqref{eq: integrability verification 2} the exponential moment condition is satisfied for $\nu_n$ and therefore by Theorem~\ref{thm:Lpmain} there exists a functional $f_\ell\in\mathcal L_\Lambda$ such that
  \begin{equation*}
    \|F-f_\ell\|_{L^p(\Lambda_T^{\pi_n})}^p<\Bigl(\frac \epsilon 2\Bigr)^p.
  \end{equation*}
  Thus, there exists a linear functional $\boldsymbol\ell$ on the signature of $\hX^{\pi_n}$ such that
  \begin{equation*}
    \E\Bigl[\int_0^T|f(t,\hX^{\pi_n})-\boldsymbol\ell(\hbbX_t^{\pi_n})|^p\dd t\Bigr]<\Bigl(\frac \epsilon 2\Bigr)^p,
  \end{equation*}
  since
  \begin{align*}
    \E\Bigl[\int_0^T|f(t,\hX^{\pi_n})-\boldsymbol\ell(\hbbX_t^{\pi_n})|^p\dd t\Bigr]&=\int_{\widehat{\mathcal C}^{\pi_n}}\int_0^T|(F\circ\phi-f_\ell\circ\phi)(t,\hX^{\pi_n})|^p\dd t\dd\mu_{\hX^{\pi_n}}\\
    &=\int_{\Lambda_T^{\pi_n}}|F-f_\ell|^p\dd\nu_n\\
    &=\|F-f_\ell\|_{L^p(\Lambda_T^{\pi_n})}^p<\Bigl(\frac \epsilon 2\Bigr)^p.
  \end{align*} 
  Combining with the first step and using Minkowski's inequality gives the claim.
\end{proof}

\begin{example}
  Let $X=W^H$ be a $d$-dimensional fractional Brownian motion with Hurst index $H\in(\frac{1}{4},\frac{1}{2}]$, $\varrho=\frac{1}{2H}$ and let $\alpha\in(\frac{1}{4},H)$. Then by \cite[Proposition~15.5]{Friz2010}, there exists a constant $C=C(H)$ such that for all $0\le s<t\le T$
  \begin{equation*}
    \|R_{X^i}\|_{\frac{1}{2H};[s,t]^2}\le C |t-s|^{\frac{1}{2H}}.
  \end{equation*}
  Thus, $W^H$ satisfies the covariance regularity assumption imposed throughout this section. Consequently, all preceding corollaries apply to the piecewise linear interpolations $(W^H)^{\pi_n}$. In particular, for $H=\frac{1}{2}$, fractional Brownian motion coincides with standard Brownian motion. In this case $\varrho=1$ and the admissible range becomes $\alpha\in(\frac{1}{3},\frac{1}{2})$. Hence, the preceding results hold, in particular, for standard $d$-dimensional Brownian motion.
\end{example}

\subsection{Convergence rates for piecewise linear approximations of Gaussian signatures}\label{subsec: approximation error}

In this subsection we derive quantitative convergence estimates for the signatures of piecewise linear approximations of Gaussian processes.

\medskip

We first briefly recall the construction of the canonical rough path lift of a Gaussian process.
Let $X=(X^1,\ldots,X^d)$ be a continuous, centred Gaussian process with independent components. We assume that there exists $\varrho\in[1,2)$ and $M<\infty$ such that 
\begin{equation*}
  \|R_{X^i}\|_{\varrho;[s,t]^2}\le M|t-s|^{\frac{1}{\varrho}}
\end{equation*}
for every $i=1,\ldots,d$ and $0\le s<t\le T$. For a partition $\pi$ of $[0,T]$, let $X^\pi$ denote the piecewise linear interpolation of $X$ along $\pi$. Since $X^\pi$ has bounded variation, its iterated integrals are defined in the classical Riemann--Stieltjes sense. By \cite[Theorem~15.33]{Friz2010}, for every $\alpha\in(\frac{1}{4},\frac{1}{2\varrho})$, the lifted piecewise linear approximations converge in $L^r(\Omega)$, for every $r\ge1$, with respect to the $\alpha$-H\"older rough path metric to a canonical geometric Gaussian rough path. Since $\varrho<2$, one may choose $\alpha>\frac{1}{4}$, so that the first three levels are sufficient to describe the corresponding rough path. Following \cite[Definition~15.34]{Friz2010}, the resulting $G^3(\R^d)$-valued process $\bX$ is called the enhanced Gaussian process associated with $X$, and its sample paths are called Gaussian rough paths.

The number of required levels depends more precisely on $\varrho$. If $\varrho\in[1,\frac{3}{2})$, one may choose $\alpha\in(\frac{1}{3},\frac{1}{2\varrho})$, so that a $G^2(\R^d)$-valued lift is sufficient. If $\varrho\in[\frac{3}{2},2)$, one chooses $\alpha\in(\frac{1}{4},\frac{1}{2\varrho})$, and a $G^3(\R^d)$-valued lift is required. 

Moreover, we denote by $\hbX$ the time-extended Gaussian rough path and by $\hbbX$ its associated signature, which is defined as the unique Lyons extension of $\hbX$.

\begin{lemma}
  Let $p\ge 1$, $\varrho\in[1,2)$ and $\alpha\in(\frac{1}{4},\frac{1}{2\varrho})$. Let $X$ be as in Proposition~\ref{prop: approx Gaussian functionals}, $\hX=(\cdot,X)$ be the time-extended Gaussian process and $\hbbX$ be the corresponding time-extended Gaussian signature. Let $\hbbX^{\pi_n}$ be the signature of the piecewise linear interpolated Gaussian process $\hX^{\pi_n}\in\widehat{\mathcal C}^{\pi_n}$ along $\pi_n$. Let $N\in\N_0$ and let $\boldsymbol\ell=\sum_{|I|\le N}\ell_Ie_I$, $\ell_I\in\R$ be fixed. Then, for every $0<\eta<\frac{1}{\varrho}-\frac{1}{2}$, there exists a constant $C=C(d,N,p,T,M,\varrho,\eta,\boldsymbol\ell)>0$ such that for every $n\in\N$,
  \begin{equation*}
    \sup_{t\in[0,T]}\|\boldsymbol\ell(\hbbX_t)-\boldsymbol\ell(\hbbX_t^{\pi_n})\|_{L^p(\Omega)}\le C |\pi_n|^\eta.
  \end{equation*}
\end{lemma}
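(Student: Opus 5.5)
By linearity of $\boldsymbol\ell$ it suffices to prove, for every multi-index $I$ with $|I|\le N$,
\begin{equation*}
  \sup_{t\in[0,T]}\|\langle e_I,\hbbX_t-\hbbX^{\pi_n}_t\rangle\|_{L^p(\Omega)}\le C|\pi_n|^\eta .
\end{equation*}
Then $C$ is $\sum_I|\ell_I|$ times the level-wise constants. Since $\hbbX_t=\hbbX_{0,t}$, it is enough to control $|\hbbX^{(k)}_{0,t}-\hbbX^{\pi_n,(k)}_{0,t}|$ for $k\le N$ uniformly in $t$. I will reduce this to a rough path distance between the Gaussian rough path and its piecewise linear lift, and then invoke the known $L^r$-rates for Gaussian rough paths.

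\emph{Step 1: deterministic local Lipschitz estimate.} Write $\hbX^{(n)}$ for the canonical lift of $\hX^{\pi_n}$, and choose $\alpha'\in(\frac14,\frac1{2\varrho})$. The continuity of the Lyons extension in inhomogeneous Hölder metrics is locally Lipschitz: for rough paths with $\|\cdot\|_{cc,\alpha'}\le R$ one has, for $k\le N$,
\begin{equation*}
  \sup_{t}|\hbbX^{(k)}_{0,t}-\hbbX^{\pi_n,(k)}_{0,t}|\le C_{N,T}\,(1+\|\hbX\|_{cc,\alpha'}+\|\hbX^{(n)}\|_{cc,\alpha'})^{N}\,\rho_{\alpha'}(\hbX,\hbX^{(n)}),
\end{equation*}
where $\rho_{\alpha'}$ is the inhomogeneous $\alpha'$-Hölder rough path distance over the first three levels. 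This follows from \cite[Theorem~9.10]{Friz2010} and its inhomogeneous Lipschitz version, together with the fact that the time component is smooth and identical for both paths. The time extension is handled by the usual Young cross-integrals, since $(t,X)$ with $t$ of bounded variation is a jointly Gaussian-type lift that is covered by the same theory.

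\emph{Step 2: probabilistic input.} Apply Hölder's inequality in $\Omega$ to split the $L^p$ norm into $\|(1+\cdots)^N\|_{L^{2p}}\,\|\rho_{\alpha'}\|_{L^{2p}}$. The first factor is bounded uniformly in $n$ by the Fernique-type estimate \cite[Corollary~15.29]{Friz2010}, since the interpolations satisfy the covariance bound with constant $CM$ by \cite[Proposition~15.11]{Friz2010}, as already used above. For the second factor I would use the quantitative convergence of the lifted piecewise linear approximations in \cite[Theorem~15.33]{Friz2010} (or its rate version, Theorem~15.37 / Section~15.5). The key input there is an interpolation estimate on covariances: $\|R_{X-X^{\pi_n}}\|_{\infty}$ is small, of order $|\pi_n|^{1/\varrho}$, while the $\varrho$-variation remains bounded. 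Interpolating between the $\varrho$-variation and the supremum bound yields $\|R_{X-X^{\pi_n}}\|_{\varrho';[s,t]^2}\lesssim|\pi_n|^{\theta}|t-s|^{1/\varrho'}$ for some $\varrho'>\varrho$. The Gaussian rough path rate then gives $\|\rho_{\alpha'}(\hbX,\hbX^{(n)})\|_{L^{2p}}\lesssim|\pi_n|^{\eta}$ with any $\eta<\frac1\varrho-\frac12$, by choosing $\varrho'$ close to $\varrho$ and $\alpha'$ close to $\frac1{2\varrho'}$. Because Wiener chaos of fixed order has equivalent moments, it suffices to prove this rate in $L^2$.

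\emph{Main obstacle.} The main difficulty is to match the exponent $\frac1\varrho-\frac12$ exactly. The rate in Friz--Victoir is stated as $|\pi_n|^{\frac1{2\varrho}-\alpha'}$-type or $\sup$-covariance$^{\theta}$-type, and one must track how the third level (needed when $\varrho\ge\frac32$) and the time cross terms contribute. The paper later improves this to a sharp rate in the Brownian case, which suggests that here the route is exactly this interpolation. If the direct citation gives only a smaller exponent, I would instead estimate each level $\langle e_I,\cdot\rangle$ with $|I|\le 3$ directly in $L^2$ via 2D Young integrals of $R_{X}-R_{X^{\pi_n}}$, which is controlled by $|\pi_n|^{1/\varrho-1/2}$. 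After that, Step 1 would propagate the estimate to levels $\le N$.
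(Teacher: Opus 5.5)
\textbf{There is a genuine gap.} It sits exactly at the point you labelled the main obstacle: the route through a three-level H\"older rough path distance, followed by local Lipschitz continuity of the Lyons extension, cannot reach the exponent $\frac1\varrho-\frac12$.

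Your Step~2 claim that $\|\rho_{\alpha'}(\hbX,\hbX^{(n)})\|_{L^{2p}}\lesssim|\pi_n|^{\eta}$ for every $\eta<\frac1\varrho-\frac12$ is false. The level-one part of $\rho_{\alpha'}$ dominates $|X_{s,t}-X^{\pi_n}_{s,t}|/|t-s|^{\alpha'}$ for every fixed $s<t$. Take Brownian motion, $s=t_j$, and $t$ the midpoint of a mesh interval of length $\Delta=|\pi_n|$. Then each component of $X_{s,t}-X^{\pi_n}_{s,t}$ has standard deviation $\sqrt{\Delta}/2$, so $\|\rho_{\alpha'}\|_{L^{2p}}\ge c\,|\pi_n|^{1/2-\alpha'}$. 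Since you need $\alpha'>\frac14$, this is no better than $|\pi_n|^{1/4}$, whereas the lemma asserts every $\eta<\frac12$.

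In general, your covariance interpolation gives
\[
\|X_{s,t}-X^{\pi_n}_{s,t}\|_{L^2}\lesssim|\pi_n|^{\frac1{2\varrho}-\frac1{2\varrho'}}|t-s|^{\frac1{2\varrho'}},
\]
hence a H\"older rate $\frac1{2\varrho}-\frac1{2\varrho'}$ for $\alpha'<\frac1{2\varrho'}$. The constraint $\alpha'>\frac14$ forces $\varrho'<2$, so the best exponent this route can give is $\frac12(\frac1\varrho-\frac12)$, half of what is claimed. Your proposed choice ($\varrho'$ close to $\varrho$) even drives this rate to $0$.

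The fallback does not repair this. Fixed-time $L^2$ bounds on levels $\le 3$ are not the pathwise rough path distance that Step~1 consumes. Upgrading increment bounds to such a distance reintroduces the same loss.

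\textbf{What is missing} is a direct estimate of \emph{all} levels $k\le N$ in a metric weak enough to see an almost-uniform rate. The paper takes this from \cite[Theorem~2.4.1]{Riedel2013}, the Friz--Riedel rates for the full Gaussian rough path. Set $\frac1\gamma:=\frac1\varrho-2\eta$, so that $\gamma>\varrho$ and $\frac1\gamma+\frac1\varrho>1$, and take any $q>2\gamma$. Then, at every truncation level $N$,
\[
\|\rho^N_{q\textup{-var}}(\hbbX^{\pi_n,N},\hbbX^N)\|_{L^p(\Omega)}\le C\sup_{t}\|X^{\pi_n}_t-X_t\|_{L^2(\Omega)}^{1-\varrho/\gamma}.
\]
The large $q$ is what allows a rate close to the uniform one. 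Its proof controls the higher levels directly, with $R_{X-X^{\pi_n}}$ measured in $\gamma$-variation and interpolated between its sup norm and its $\varrho$-variation; this is where $\frac1\gamma+\frac1\varrho>1$ enters. It does not go through Lyons-lift continuity in a H\"older metric with $\alpha'>\frac14$.

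Combine this with $\sup_t\|X_t-X^{\pi_n}_t\|_{L^2}\le\sqrt{dM}\,|\pi_n|^{1/(2\varrho)}$ to get exactly $\frac1{2\varrho}\bigl(1-\frac\varrho\gamma\bigr)=\eta$. The fixed-$t$ bound then follows from $|\hbbX^{(k)}_{0,t}-\hbbX^{\pi_n,(k)}_{0,t}|\le\rho^{(k)}_{q\textup{-var}}$.

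Alternatively, since the statement is only pointwise in $t$, you could drop rough path metrics altogether. You would run your 2D-Young $L^2$ computation for every level $k\le N$, not only $k\le 3$, and then use equivalence of moments on finite Wiener chaos. That multi-level computation is precisely the nontrivial content your plan leaves out.
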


\begin{proof}
  If $N=0$, then $\boldsymbol\ell=\ell_\emptyset e_\emptyset$ and since $\langle e_\emptyset,\hbbX_t^{\pi_n}\rangle= \langle e_\emptyset,\hbbX_t\rangle$, the estimate is immediate. We therefore assume that $N>0$.

  Fix $0<\eta<\frac{1}{\varrho}-\frac{1}{2}$ and define $\gamma>\varrho$ by $\frac{1}{\gamma}:=\frac{1}{\varrho}-2\eta$. Then $\frac{1}{\gamma}+\frac{1}{\varrho}>1$. Let $q>2\gamma$.

  For $\mathbf x,\mathbf y\in C([0,T];G^N(\R^d))$, we define
  \begin{equation*}
    \rho^N_{q\textup{-var}}(\mathbf x,\mathbf y):=\max_{k=1,\ldots,N}\rho_{q\textup{-var}}^{(k)}(\mathbf x,\mathbf y):=\max_{k=1\ldots,N}\sup_{(t_i)\subset [0,T]}\Bigl(\sum_{i} |\mathbf x_{t_i,t_{i+1}}^{(k)}-\mathbf y_{t_i,t_{i+1}}^{(k)}|^{q/k}\Bigr)^{k/q}.
  \end{equation*}
  By \cite[Chapter~15.2.3]{Friz2010}, the piecewise linear approximation associated with $(\pi_n)_{n\in\N}$ satisfies the assumptions of \cite[Theorem~2.4.1]{Riedel2013}. Hence, applying \cite[Theorem~2.4.1]{Riedel2013} at truncation level $N$, there exists a constant $C=C(d,q,p,\varrho,\gamma,M,N,T)>0$ such that
  \begin{equation}\label{eq: q-var estimate}
    \|\rho^N_{q\textup{-var}}(\hbbX^{\pi_n,N},\hbbX^N)\|_{L^p(\Omega)}\le C \sup_{t\in[0,T]}\|X_t^{\pi_n}-X_t\|_{L^2(\Omega)}^{1-\frac{\varrho}{\gamma}}.
  \end{equation}
  The theorem is originally formulated for the spatial Gaussian process. Since time extension corresponds to adjoining the deterministic bounded-variation path $t\mapsto t$, the same estimate holds for the time-extended lifts after possibly enlarging the constant $C$. The dependence on $T$ also absorbs the deterministic rescaling from $[0,1]$ to $[0,T]$.

  Now observe that for $t\in[t_j^n,t_{j+1}^n]$ we have with $\lambda=\frac{t-t_j^n}{t_{j+1}^n-t_j^n}$
  \begin{equation*}
    X_t-X_t^{\pi_n}=(1-\lambda)(X_t-X_{t_j^n})+\lambda(X_t-X_{t_{j+1}^n}).
  \end{equation*}
  And in turn, we obtain
  \begin{align*}
    \|X_t-X_t^{\pi_n}\|_{L^2(\Omega)}&\le (1-\lambda)\|X_t-X_{t_j^n}\|_{L^2(\Omega)}+\lambda\|X_t-X_{t_{j+1}^n}\|_{L^2(\Omega)}\\
    &\le \sqrt{dM}|\pi_n|^{\frac{1}{2\varrho}},
  \end{align*}
  where we used that for each component $i=1,\ldots,d$, the covariance assumption gives
  \begin{equation*}
    \|X_t^i-X_s^i\|_{L^2(\Omega)}^2\le \|R_{X^i}\|_{\varrho;[s,t]^2}\le M|t-s|^{1/\varrho}.
  \end{equation*}
  In particular,
  \begin{equation*}
    \sup_{t\in[0,T]}\|X_t^{\pi_n}-X_t\|_{L^2(\Omega)}\le \sqrt{dM}|\pi_n|^{\frac{1}{2\varrho}}.
  \end{equation*}
  Substituting this into \eqref{eq: q-var estimate} yields,
  \begin{equation*}
    \|\rho^N_{q\textup{-var}}(\hbbX^{\pi_n,N},\hbbX^N)\|_{L^p(\Omega)}\le C (dM)^{\frac{1}{2}(1-\frac{\varrho}{\gamma})}|\pi_n|^{\frac{1}{2\varrho}(1-\frac{\varrho}{\gamma})}=C_1|\pi_n|^{\eta},
  \end{equation*}
  where $C_1:=C(dM)^{\frac{1}{2}(1-\frac{\varrho}{\gamma})}$. Let now $I\in\{0,\ldots,d\}^k$ be a multi-index of length $1\le k\le N$. We observe that
  \begin{equation*}
    |\hbbX_t^{\pi_n,(k)}-\hbbX_t^{(k)}|\le \rho_{q\textup{-var}}^{(k)}(\hbbX^{\pi_n,N},\hbbX^N)\le \rho^N_{q\textup{-var}}(\hbbX^{\pi_n,N},\hbbX^N),
  \end{equation*}
  and therefore
  \begin{align*}
    \|\langle e_I,\hbbX_t^{\pi_n}\rangle-\langle e_I,\hbbX_t\rangle\|_{L^p(\Omega)}&\le \|\hbbX_t^{\pi_n,(k)}-\hbbX_t^{(k)}\|_{L^p(\Omega)}\\
    &\le C_1|\pi_n|^{\eta}.
  \end{align*}

  Finally, using Minkowski's inequality, we have
  \begin{align*}
    \sup_{t\in[0,T]}\| \boldsymbol\ell(\hbbX_t)- \boldsymbol\ell(\hbbX^{\pi_n}_t)\|_{L^p(\Omega)}
    &= \sup_{t\in[0,T]}\Bigl\|\sum_{|I|\le N} \ell_I\bigl(\langle e_I,\hbbX_t\rangle-\langle e_I,\hbbX^{\pi_n}_t\rangle\bigr)\Bigl\|_{L^p(\Omega)} \\
    &\le  \sum_{0<|I|\le N}|\ell_I|
    \sup_{t\in[0,T]}\Bigl\|\langle e_I,\hbbX_t\rangle-\langle e_I,\hbbX^{\pi_n}_t\rangle\Bigr\|_{L^p(\Omega)}\\
    &\le C_1 \sum_{0<|I|\le N}|\ell_I| |\pi_n|^{\eta}.
  \end{align*}
  Now set $C_\ell:=C_1\sum_{0<|I|\le N} |\ell_I|$, then $\sup_{t\in[0,T]}\| \boldsymbol\ell(\hbbX_t)- \boldsymbol\ell(\hbbX^{\pi_n}_t)\|_{L^p(\Omega)}\le C_\ell |\pi_n|^\eta$. Since $\gamma$ is determined by $\varrho$ and $\eta$ and $q>2\gamma$ may be fixed once and for all as a function of $\gamma$ the constant $C_\ell$ depends only on $d,N,p,T,M,\varrho,\eta$ and $\boldsymbol\ell$ as asserted.
\end{proof}

Although this result applies in particular to Brownian motion (with $\varrho=1$), in the Brownian setting we can obtain the sharp convergence rate $1/2$, as shown in the following lemma.

Observe that in this case, the canonical Gaussian rough path introduced above coincides with the Stratonovich lift of Brownian motion and the  Brownian signature is given explicitly by iterated Stratonovich integrals. More precisely, let $W$ be a $d$-dimensional Brownian motion. Its Stratonovich lift to a geometric rough path is given by
\begin{equation*}
  \bW_t=\Bigl(1,W_t,\int_0^t W_s\otimes \circ \d W_s\Bigr),\quad t\in[0,T],
\end{equation*}
where the stochastic integral $\int_0^t W_s\otimes\circ \dd W_s$ is defined as a classical Stratonovich integral. It is well known that $\bW_t$ takes values in $G^2(\R^d)$ and is almost surely a geometric $\alpha$-H\"older rough path for any $\alpha\in(\frac 1 3,\frac 1 2)$. We then denote by $\hbW$ the time-extended Stratonovich-enhanced Brownian rough path and by $\hbbW$ its associated signature, which coincides with iterated Stratonovich integrals. We refer to $\hbW$ and $\hbbW$ as the (time-extended) Brownian rough path and (time-extended) Brownian signature, respectively.

\begin{lemma}\label{lem: brownian signature}
  Let $\alpha\in(\frac 1 3,\frac 1 2)$ and $p\ge 1$. Let $W$ be a $d$-dimensional Brownian motion, $\hW=(\cdot,W)$ be the time-extended Brownian motion and $\hbbW$ be the corresponding time-extended Brownian signature. Let $\hbbW^{\pi_n}$ be the signature of the piecewise linear interpolated Brownian motion $\hW^{\pi_n}\in\widehat{\mathcal C}^{\pi_n}$ along $\pi_n$. Let $N\in\N_0$ and let $\boldsymbol\ell=\sum_{|I|\le N}\ell_Ie_I,\ell_I\in\R$ be fixed. Then, there exists a constant $C=C_{d,N,p,T,\boldsymbol\ell}>0$ such that for every $n\in\N$,
  \begin{equation*}
  \sup_{t\in[0,T]}\| \boldsymbol\ell(\hbbW_t)- \boldsymbol\ell(\hbbW^{\pi_n}_t)\|_{L^p(\Omega)}\le C|\pi_n|^{\frac{1}{2}}.
  \end{equation*}
  Consequently,
  \begin{equation}\label{eq: Lp rate}
    \E\Bigl[\int_0^T|\boldsymbol\ell(\hbbW_t)-\boldsymbol\ell(\hbbW_t^{\pi_n})|^p\dd t\Bigr]^{\frac{1}{p}}\le T^{\frac{1}{p}}C|\pi_n|^{\frac{1}{2}}.
  \end{equation}
  In particular, 
  \begin{equation*}
    \E\Bigl[\int_0^T|\boldsymbol\ell(\hbbW_t)-\boldsymbol\ell(\hbbW_t^{\pi_n})|^p\dd t\Bigr]^{\frac{1}{p}}\to 0,
  \end{equation*}
  as $n\to\infty$.
\end{lemma}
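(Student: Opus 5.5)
The plan is to reduce, by linearity and Minkowski's inequality, to a single coordinate $\langle e_I,\hbbW_t-\hbbW^{\pi_n}_t\rangle$ with $1\le|I|=k\le N$; the case $|I|=0$ is trivial, as in the previous lemma. For each such $I$ we will show, by induction on $k$, the uniform bound
\begin{equation*}
\sup_{t\in[0,T]}\bigl\|\langle e_I,\hbbW_t\rangle-\langle e_I,\hbbW^{\pi_n}_t\rangle\bigr\|_{L^p(\Omega)}\le C_{k,p,T}|\pi_n|^{1/2}.
\end{equation*}
By hypercontractivity, both terms lie in a finite Wiener chaos of order at most $k$: the piecewise linear iterated integrals are polynomials of degree $k$ in the Gaussian increments, and the Stratonovich integrals lie in chaoses of order $\le k$. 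Hence all $L^p$ norms are equivalent to the $L^2$ norm up to a constant depending on $k,p$. It therefore suffices to treat $p=2$.

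The induction uses the recursive structure $\langle e_I,\hbbW_t\rangle=\int_0^t\langle e_{I'},\hbbW_s\rangle\circ\d \hW^{i_k}_s$ and the analogous Riemann--Stieltjes identity for $\hbbW^{\pi_n}$. First, at a grid point $t=t_j^n$, we use Chen's relation. The piecewise linear signature over $[t_{j}^n,t_{j+1}^n]$ is $\exp_\otimes(\hW_{t_j^n,t_{j+1}^n})$, whereas the Brownian signature over the same interval has the same level-one part and differs only at levels $\ge2$. The two increments therefore differ by a random element $\Delta_j$ whose level-$m$ component has $L^2$ norm $O(|t_{j+1}^n-t_j^n|^{m/2})$ with $m\ge2$. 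Moreover, conditionally on $\cF_{t_j^n}$, $\Delta_j$ has mean $\E[\hbbW_{t_j^n,t_{j+1}^n}]-\E[\exp(\hW_{t_j^n,t_{j+1}^n})]$, which is of order $|t_{j+1}^n-t_j^n|$ (it comes from the $\tfrac12\delta_{ij}$ Stratonovich correction terms and their time couplings). Second, we telescope
\begin{equation*}
\hbbW_{t_j^n}-\hbbW^{\pi_n}_{t_j^n}=\sum_{i<j}\hbbW^{\pi_n}_{t_i^n}\otimes\bigl(\hbbW_{t_i^n,t_{i+1}^n}-\hbbW^{\pi_n}_{t_i^n,t_{i+1}^n}\bigr)\otimes\hbbW_{t_{i+1}^n,t_j^n},
\end{equation*}
and split each summand into a conditional-mean part and a martingale-difference part. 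The martingale parts are orthogonal in $L^2$, which gives $\bigl(\sum_i|\Delta t_i|^{2}\bigr)^{1/2}\le |\pi_n|^{1/2}T^{1/2}$. For the mean parts, however, the naive sum gives only $\sum_i|\Delta t_i|=T$, with no decay.

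This mean term is the main obstacle. It is handled by Brownian scaling: the exact second-level expectations of $\exp(\hW_{s,t})$ and of $\hbbW_{s,t}$ agree. For the $W$-only components both equal $\tfrac12(t-s)I_d$, since the Stratonovich lift and the linear lift both have symmetric part $\tfrac12 W_{s,t}^{\otimes2}$ and the antisymmetric (Lévy area) part has mean zero. For the mixed time--space components the means vanish by symmetry $W\mapsto -W$, and for pure time components they agree deterministically. At higher levels, the mean of $\Delta_j$ is of order $|\Delta t_j|^{m/2}$ with $m\ge3$, and the mean of an odd-level $W$-component vanishes. Hence $\E[\Delta_j\mid\cF_{t_j^n}]=O(|\Delta t_j|^{3/2})$, and summing gives $O(|\pi_n|^{1/2})$. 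This requires the prefactors $\hbbW^{\pi_n}_{t_i^n}$ and $\hbbW_{t_{i+1}^n,t_j^n}$ to have bounded moments, which follows from Gaussian moment bounds uniform in $n$. Independence of the post-factor from $\cF_{t_{i+1}^n}$ lets us condition properly; to handle it cleanly, we expand $\hbbW_{t_{i+1}^n,t_j^n}$ and use independence of increments.

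Third, for $t\in(t_j^n,t_{j+1}^n)$, we write $\hbbW_t=\hbbW_{t_j^n}\otimes\hbbW_{t_j^n,t}$ and similarly for the interpolation. Both local increments differ from $\mathbf 1$ by $O(|\pi_n|^{1/2})$ in $L^2$, so the grid estimate transfers uniformly in $t$. Integrating in $t$ yields \eqref{eq: Lp rate}, since $\E\int_0^T|\cdot|^p\dd t\le T\sup_t\E|\cdot|^p$, and the final convergence statement follows as $|\pi_n|\to0$.
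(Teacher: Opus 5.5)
Your reduction to one coordinate and to $p=2$ (everything lies in Wiener chaoses of order $\le k$) is sound. So are your identification of $\E[\Delta_i]$ as the critical quantity, the cancellation $\E[\Delta_i^{(2)}]=0$, and your off-grid step. The gap is the assertion that the martingale parts are orthogonal in $L^2$. With your global telescoping, the fluctuation term attached to the $i$-th interval is
\[
M_i:=\hbbW^{\pi_n}_{t_i^n}\otimes\bigl(\Delta_i-\E[\Delta_i]\bigr)\otimes\hbbW_{t_{i+1}^n,t_j^n}.
\]
For $i<i'$ this term is not $\cF_{t_{i'}^n}$-measurable. Its post-factor contains the Brownian block $\hbbW_{t_{i'}^n,t_{i'+1}^n}$, and the pre-factor of $M_{i'}$ contains $\exp_{\otimes}(\hW_{t_i^n,t_{i+1}^n})$. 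So $\E[M_{i'}\mid\cF_{t_{i'}^n}]=0$ does not give $\E[\langle e_I,M_i\rangle\langle e_I,M_{i'}\rangle]=0$, and the cross terms genuinely do not vanish. Two facts show this:
\begin{itemize}
\item For distinct spatial letters $a,b$, $\langle e_a\otimes e_b,\Delta_{i'}\rangle$ is the L\'evy area of $W$ on $[t_{i'}^n,t_{i'+1}^n]$, which also sits in $\langle e_a\otimes e_b,\hbbW_{t_{i'}^n,t_{i'+1}^n}\rangle$.
\item For $c\neq r$, a Brownian bridge computation gives $\E[\langle e_c\otimes e_c\otimes e_r,\Delta_i\rangle\mid W_{t_i^n,t_{i+1}^n}]=\tfrac{1}{12}(t_{i+1}^n-t_i^n)W^r_{t_i^n,t_{i+1}^n}$. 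This is correlated with $\langle e_c\otimes e_c\otimes e_r,\exp_{\otimes}(\hW_{t_i^n,t_{i+1}^n})\rangle$.
\end{itemize}
Take $j=2$ and the word $e_c\otimes e_c\otimes e_r\otimes e_a\otimes e_b$ with $a,b,c,r$ distinct. Only one allocation of letters survives, and the cross term is a nonzero multiple of $(t_1^n-t_0^n)^3(t_2^n-t_1^n)^2$. Without orthogonality the triangle inequality only gives $\sum_i(t_{i+1}^n-t_i^n)=T$, so the rate is lost exactly at this step. Your remark about expanding $\hbbW_{t_{i+1}^n,t_j^n}$ and using independence of increments does not help: carrying out that expansion is precisely what exhibits these cross terms.

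The repair is the induction on $k$ that you announce but never use. Set $E:=\hbbW-\hbbW^{\pi_n}$ and replace the global telescoping by the one-step Chen recursion
\[
E_{t_{i+1}^n}=E_{t_i^n}\otimes\hbbW_{t_i^n,t_{i+1}^n}+\hbbW^{\pi_n}_{t_i^n}\otimes\Delta_i .
\]
At level $k$ every increment is then a sum of $\cF_{t_i^n}$-measurable factors tensored with local factors independent of $\cF_{t_i^n}$. The measurable factors are $E^{(k-m)}_{t_i^n}$ with $1\le m\le k-1$, or $\hbbW^{\pi_n,(k-m)}_{t_i^n}$ with $m\ge 2$. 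Centring the local factors now produces genuine martingale differences. Their squared $L^2$ norms are $O(|\pi_n|(t_{i+1}^n-t_i^n))$, by the induction hypothesis and the size of $\Delta_i$. The only new drift terms, $E^{(k-m)}_{t_i^n}\otimes\E[\hbbW^{(m)}_{t_i^n,t_{i+1}^n}]$, are $O(t_{i+1}^n-t_i^n)$ times a lower-level error, so they sum to $O(|\pi_n|^{1/2})$. The terms $\hbbW^{\pi_n,(k-m)}_{t_i^n}\otimes\E[\Delta_i^{(m)}]$ are handled by your mean computation. Alternatively, you can keep your decomposition and show, by factorising over the five independent blocks, that the cross terms are $O((t_{i+1}^n-t_i^n)^2(t_{i'+1}^n-t_{i'}^n)^2)$. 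With either change your argument becomes a self-contained proof. The paper does not prove the level-wise bound by hand: it quotes \cite[Theorem~1.1.7]{Riedel2013} for $\|\hbbW^{(k)}_t-\hbbW^{(k),\pi_n}_t\|_{L^p(\Omega)}\le C_{k,p,T}|\pi_n|^{1/2}$, then concludes with Minkowski and Fubini exactly as in your last step.
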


\begin{proof}
  Let $I\in\{0,\ldots,d\}^k$ be a multi-index of length $k\ge 0$. First note that for $k=0$, we have $\langle e_I,\hbbW_t\rangle=\langle e_I,\hbbW_t^{\pi_n}\rangle$, and therefore, $\|\langle e_I,\hbbW_t\rangle-\langle e_I,\hbbW^{\pi_n}_t\rangle\|_{L^p(\Omega)}=0$. 
  Now for $k>0$, using \cite[Theorem~1.1.7]{Riedel2013} in the second step, we have for some constant $C_{k,p,T}>0$,
  \begin{equation*}
    \|\langle e_I,\hbbW_t\rangle-\langle e_I,\hbbW^{\pi_n}_t\rangle\|_{L^p(\Omega)}\nonumber\\
    \le \| \hbbW_t^{(k)}-\hbbW_t^{(k),\pi_n}\|_{L^p(\Omega)}\\ 
    \le C_{k,p,T}|\pi_n|^{\frac{1}{2}}.
 \end{equation*}
  
  Thus, using Minkowski's inequality, we have for $t\in[0,T]$
  \begin{align*}
    \| \boldsymbol\ell(\hbbW_t)- \boldsymbol\ell(\hbbW^{\pi_n}_t)\|_{L^p(\Omega)}
    &= \Bigl\|\sum_{|I|\le N} \ell_I\bigl(\langle e_I,\hbbW_t\rangle-\langle e_I,\hbbW^{\pi_n}_t\rangle\bigr)\Bigl\|_{L^p(\Omega)} \\
    &\le  \sum_{0<|I|\le N}|\ell_I|
    \Bigl\|\langle e_I,\hbbW_t\rangle-\langle e_I,\hbbW^{\pi_n}_t\rangle\Bigr\|_{L^p(\Omega)}\\
    &\le  \sum_{0<|I|\le N}|\ell_I| C_{|I|,p,T}|\pi_n|^{\frac{1}{2}}.
  \end{align*}
  Define $C=C_{d,N,p,T,\boldsymbol\ell}:=\sum_{0<|I|\le N}|\ell_I|C_{|I|,p,T}$. Then,
  \begin{equation*}
  \sup_{t\in[0,T]}\| \boldsymbol\ell(\hbbW_t)- \boldsymbol\ell(\hbbW^{\pi_n}_t)\|_{L^p(\Omega)}\le C|\pi_n|^{\frac{1}{2}}.
  \end{equation*}
 
  Now by Fubini's theorem, we obtain
  \begin{equation*}
    \E\Bigl[\int_0^T|\boldsymbol\ell(\hbbW_t)-\boldsymbol\ell(\hbbW^{\pi_n}_t)|^p\dd t\Bigr]
    = \int_0^T\|\boldsymbol\ell(\hbbW_t)-\boldsymbol\ell(\hbbW^{\pi_n}_t)\|^p_{L^p(\Omega)}\dd t\le \int_0^T C^p|\pi_n|^{\frac{p}{2}}\dd t
    = T C^p |\pi_n|^{\frac{p}{2}}.
  \end{equation*}
  Taking the $p$-th root gives \eqref{eq: Lp rate} and as $n\to\infty$ we have that $|\pi_n|\to 0$ and therefore the last claim follows.
\end{proof}

\begin{remark}
  Our result establishes that linear functionals of the Brownian signature can be approximated by the same linear functionals evaluated on the signatures of linearly interpolated Brownian paths. Together with Proposition~4.2 in \cite{Ceylan2025}, this implies that $L^p$-functionals on Brownian rough paths can be approximated by linear combinations of discrete-time signatures.
\end{remark}

\section{Approximation of random ordinary and stochastic differential equations}\label{sec: approximation of SDEs}

In this section we show that we can approximate solutions to random ordinary differentiable equations (ODEs) and stochastic differential equations (SDEs) by linear functionals acting on the time-extended signature of piecewise linearly interpolated Brownian motion.

\subsection{Approximation of random ODEs}

Fix $n\in\N$ and consider the random ODE
\begin{equation}\label{eq: ode}
  Y^{\pi_n}_t = y_0 + \int_0^t \mu(s,Y^{\pi_n}_s) \dd s + \int_0^t \sigma(s,Y^{\pi_n}_s) \dd W^{\pi_n}_s, \quad t \in [0,T],
\end{equation}
where  $y_0 \in \R^m$, $\mu\colon [0,T]\times \R^m \to \R^m$ and $\sigma\colon [0,T] \times \R^m \to \R^{m\times d}$ are continuous functions and $W^{\pi_n}$ is the piecewise linear interpolation along $\pi_n$.

\begin{theorem}
  Let $1\le p<\infty$. Fix $n\in\N$. Suppose that $\mu,\sigma$ satisfy 
  \begin{equation*}
    |\mu(t,x)|+|\sigma(t,x)|\le C(1+|x|),\quad x\in\R^m,
  \end{equation*}
  for some constant $C>0$ and are globally Lipschitz continuous.  Then, for every $\epsilon>0$ there exists
  a linear functional $\boldsymbol\ell\colon T((\R^{d+1}))\to\R^m$ of the form $\hbbW_t^{\pi_n}\mapsto \sum_{|I|\le N}\ell_I\langle e_I,\hbbW^{\pi_n}_t\rangle$, for some $N\in\N_0$ and $\ell_I\in\R^m$, such that
  \begin{equation*}
    \E\Bigl[\int_0^T|Y_t^{\pi_n}-\boldsymbol\ell(\hbbW^{\pi_n}_t)|^p\,dt\Bigr]<\epsilon^p,
  \end{equation*}
\end{theorem}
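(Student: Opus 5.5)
The plan is to realise $t\mapsto Y^{\pi_n}_t$ as a non-anticipative functional on the stopped-path space $\Lambda_T^{\pi_n}$ and then invoke Theorem~\ref{thm:Lpmain}\,(ii), treating each of the $m$ components separately. This is exactly the structure used in Proposition~\ref{prop: approx Gaussian functionals}\,(ii), with Brownian motion as the Gaussian process ($\varrho=1$). The integrability hypothesis on the weight is already available from \eqref{eq: integrability verification 2} for some $\beta>0$ with $\gamma=2$, so the work lies in checking that the ODE solution is an admissible functional.

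\textbf{Step 1 (well-posedness and non-anticipativity).} For a fixed deterministic path $\hat x\in\widehat{\mathcal C}^{\pi_n}$, write $x$ for its spatial part, which is Lipschitz. Equation~\eqref{eq: ode} with $W^{\pi_n}$ replaced by $x$ is a classical ODE:
\begin{equation*}
  \dot y_t=\mu(t,y_t)+\sigma(t,y_t)\dot x_t .
\end{equation*}
Its right-hand side is globally Lipschitz in $y$, uniformly on each interval $[t_k,t_{k+1}]$, because $\dot x$ is constant there. Hence a unique global solution $y=\Phi(x)$ exists. By uniqueness, $y_t$ depends only on $x|_{[0,t]}$. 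We may therefore define
\begin{equation*}
  F\bigl(\hat x_{[0,t]}\bigr):=\Phi(x)_t ,
\end{equation*}
and $F$ is well defined on $\Lambda_T^{\pi_n}$. Gronwall's lemma shows that $\Phi$ is continuous from the Lipschitz (or $d_\infty$ on the finite-dimensional space $\mathcal C^{\pi_n}$) topology into $C([0,T];\R^m)$. Continuity of $F$ with respect to $d_{\Lambda,\alpha'}$ follows because the stopped path determines the nodes up to $t$, and $t\mapsto y_t$ is continuous. In particular $F$ is measurable, and $F\circ\phi(t,\hW^{\pi_n})=Y^{\pi_n}_t$.

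\textbf{Step 2 ($L^p$ bound).} I need to show $F\in L^p(\Lambda_T^{\pi_n};\nu_n)$, where $\nu_n=(\d t\otimes\mu_{\hW^{\pi_n}})\circ\phi^{-1}$, that is,
\begin{equation*}
  \E\Bigl[\int_0^T|Y^{\pi_n}_t|^p\,\d t\Bigr]<\infty .
\end{equation*}
The linear growth condition together with Gronwall gives
\begin{equation*}
  \sup_t|Y^{\pi_n}_t|\le \bigl(|y_0|+CT+C\|W^{\pi_n}\|_{1\text{-var}}\bigr)\exp\bigl(CT+C\|W^{\pi_n}\|_{1\text{-var}}\bigr).
\end{equation*}
For fixed $n$ we have $\|W^{\pi_n}\|_{1\text{-var}}=\sum_k|W_{t_{k+1}}-W_{t_k}|$, a finite sum of norms of Gaussian vectors. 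It therefore has finite exponential moments of every order, and the right-hand side lies in every $L^p(\Omega)$. This is the step I expect to need the most care: the Gronwall bound is exponential in the driver's variation, so finiteness relies on $n$ being fixed. The resulting constant blows up as $|\pi_n|\to0$, which is harmless for this statement.

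\textbf{Step 3 (conclusion).} Apply Theorem~\ref{thm:Lpmain}\,(ii) to each component $F^j$, $j=1,\dots,m$, with accuracy $\epsilon m^{-1}$ (adjusted for $p$). This gives linear functionals $\boldsymbol\ell^j$ such that
\begin{equation*}
  \int|F^j-\boldsymbol\ell^j(\hbbW^{\pi_n}_\cdot)|^p\,\d\nu_n<(\epsilon/m)^p .
\end{equation*}
Assemble $\boldsymbol\ell$ with coefficients $\ell_I:=(\ell^1_I,\dots,\ell^m_I)\in\R^m$, padding with zeros to a common truncation level $N$. Using $|v|\le\sum_j|v^j|$ and Minkowski's inequality, together with the change of variables through $\phi$ as in the proof of Proposition~\ref{prop: approx Gaussian functionals}\,(ii), we obtain
\begin{equation*}
  \E\Bigl[\int_0^T|Y^{\pi_n}_t-\boldsymbol\ell(\hbbW^{\pi_n}_t)|^p\,\d t\Bigr]<\epsilon^p .
\end{equation*}
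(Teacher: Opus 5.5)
Your proposal is correct and follows the paper's own proof. In both, the solution is viewed as a functional on $\Lambda_T^{\pi_n}$, its $L^p(\nu_n)$-integrability is checked, Theorem~\ref{thm:Lpmain}~(ii) is applied using the exponential moment bound \eqref{eq: integrability verification 2}, and the result is transferred back by the change of variables through $\phi$. Your Gronwall estimate in terms of $\|W^{\pi_n}\|_{1\text{-var}}$ (which has exponential moments of all orders for fixed $n$), your componentwise handling of $m>1$, and your measurability argument on stopped paths fill in details that the paper only asserts or avoids by taking $m=1$.
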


\begin{proof}
  For notational simplicity assume $m=1$.

  We rewrite the ODE~\eqref{eq: ode} as an ODE driven by the time-extended linear interpolated Brownian motion, by considering $\widehat{\sigma}\colon [0,T]\times\R\to\R^{1\times(d+1)}$ with
  \begin{equation*}
    \widehat{\sigma}=
    \begin{pmatrix}
	  \mu_1, & \sigma_{1},& \cdots &, \sigma_{d}
      \end{pmatrix},
  \end{equation*}
  then 
  \begin{equation*}
    \dd Y_t^{\pi_n}=\widehat\sigma(t,Y_t^{\pi_n})\dd\hW_t^{\pi_n}.
  \end{equation*}
  Let $\Phi(\hW^{\pi_n}_{[0,t]}):=Y_t^{\pi_n}$ denote the solution map on stopped paths. Let $\phi(t,\hW^{\pi_n})=\hW^{\pi_n}_{[0,t]}$ be the quotient map as defined in \eqref{eq: quotient map} and define the finite Borel measure $\nu_n:=(\d t\otimes\mu_{\hW^{\pi_n}})\circ\phi^{-1}$ on $(\Lambda_T^{\pi_n},\mathcal B(\Lambda_T^{\pi_n}))$. Then, by definition of $\nu_n$,
  \begin{align*}
    \int_{\Lambda_T^{\pi_n}}|\Phi|^p\dd\nu_n&=\int_{\widehat{\mathcal C}^{\pi_n}}\int_0^T|(\Phi\circ\phi)(t,\hW^{\pi_n})|^p\dd t\dd\mu_{\hW^{\pi_n}}\\
    &=\E\Bigl[\int_0^T|\Phi(\hW^{\pi_n}_{[0,t]})|^p\dd t\Bigr]\\
    &=\E\Bigl[\int_0^T|Y_t^{\pi_n}|^p\dd t\Bigr]\\
    &\le T \E \Bigl[\sup_{t\in[0,T]}|Y_t^{\pi_n}|^p\Bigr]<\infty,
  \end{align*}
  where we used that $\E[\sup_{t\in[0,T]}|Y_t^{\pi_n}|^p]<\infty$, which follows from global Lipschitz continuity and linear growth of $\mu,\sigma$. Hence $\Phi\in L^p(\Lambda_T^{\pi_n})$.

  Since the exponential moment condition required in Theorem~\ref{thm:Lpmain}~\textup{(ii)} holds on $(\Lambda_T^{\pi_n},\nu_n)$ for linear interpolated Brownian motions, see \eqref{eq: integrability verification 2}, Theorem~\ref{thm:Lpmain}~\textup{(ii)} yields $f_\ell\in\mathcal L_\Lambda$ such that
  \begin{equation*}
    \|\Phi-f_\ell\|_{L^p(\Lambda_T^{\pi_n})}<\epsilon.
  \end{equation*}
  This ensures that the solution $Y^{\pi_n}$ can be approximated by a linear combination of the signature of $\hW^{\pi_n}$, since
  \begin{align*}
    \E\Bigl[\int_0^T|Y_t^{\pi_n}-\boldsymbol\ell(\hbbW^{\pi_n}_t)|^p\dd t\Bigr]
    &=\int_{\widehat{\mathcal C}^{\pi_n}}\int_0^T|(\Phi\circ\phi-f_\ell\circ \phi)(t,\hW^{\pi_n})|^p\dd t\dd\mu_{\hW^{\pi_n}}\\
    &=\int_{\Lambda_T^{\pi_n}}|\Phi-f_\ell|^p\dd\nu_n\\
    &=\|\Phi-f_\ell\|^p_{L^p(\Lambda_T^{\pi_n})}<\epsilon^p.
  \end{align*}
  This concludes the proof.
\end{proof}

\subsection{Approximation of SDEs}

To approximate solutions of SDEs by linear functionals of signatures of piecewise linear interpolated Brownian paths, we use the stability result from Lemma~\ref{lem: brownian signature} and combine it with a global universal approximation result for Brownian signatures from \cite{Ceylan2025}.

\begin{corollary}
  Let $1\le p<\infty$. Consider the stochastic differential equation
  \begin{equation}\label{eq: sde}
    Y_t = y_0 + \int_0^t \mu(s,Y_s) \dd s + \int_0^t \sigma(s,Y_s) \dd W_s, \quad t \in [0,T],
  \end{equation}
  where $y_0 \in \R^m$, $\mu\colon [0,T]\times \R^m \to \R^m$ and $\sigma\colon [0,T] \times \R^m \to \R^{m\times d}$ are continuous functions, and $\int_0^t \sigma(s,Y_s) \dd W_s$ is defined as an It{\^o} integral. Suppose there exists a unique (strong) solution $Y$ to the SDE~\eqref{eq: sde} and that $\mu, \sigma$ satisfy the linear growth condition
  \begin{equation*}
    |\mu(t,x)|+|\sigma(t,x)|\leq C (1 + |x|),\quad x\in\R^m,
  \end{equation*}
  for some constant $C>0$.
 
  Then, for every $\epsilon>0$  there exists a linear function $\boldsymbol\ell\colon T((\R^{d+1}))\to \R^m$ of the form $\hbbW^{\pi_n}_t\mapsto\boldsymbol\ell(\hbbW^{\pi_n}_t):=\sum_{|I|\le N}\ell_I\langle e_I,\hbbW^{\pi_n}_t\rangle$, for some $N\in\N_0$ and $\ell_I\in\R^m$, and $n_0\in\N$ such that for all $n\ge n_0$, 
  \begin{equation*}
    \E\Bigl[\int_0^T|Y_t-\boldsymbol\ell(\hbbW^{\pi_n}_t)|^p\dd t\Bigr]<\epsilon^p.
  \end{equation*}
\end{corollary}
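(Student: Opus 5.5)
The proof will be a two-step triangle inequality in $L^p([0,T]\times\Omega)$: first approximate $Y$ by a linear functional of the continuous-time Brownian signature, then replace that signature by its piecewise linear counterpart using Lemma~\ref{lem: brownian signature}. As in the random ODE case we take $m=1$; the general case follows componentwise, with $\ell_I\in\R^m$.

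\emph{Step 1 (continuous-time approximation).} We invoke the global $L^p$-universal approximation result for non-anticipative functionals of the time-extended Brownian rough path from \cite{Ceylan2025}, in the form of its SDE corollary there. Since the SDE has a unique strong solution, Yamada--Watanabe lets us write $Y_t=\Phi(t,\hW)$ for a measurable non-anticipative functional $\Phi$. Because the It\^o and Stratonovich lifts differ by a deterministic bracket term, $\Phi$ can equally be viewed as a measurable functional on stopped paths of $\hbW$. The linear growth condition gives $\E[\sup_{t\le T}|Y_t|^p]<\infty$ by a standard Gronwall/BDG argument, so $\Phi\in L^p$ with respect to the measure $(\d t\otimes\P)\circ\phi^{-1}$ on stopped Brownian rough paths. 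The exponential moment condition for $\hbW$ holds by Fernique-type estimates, as shown in \cite{Ceylan2025}. Hence there exists $\boldsymbol\ell=\sum_{|I|\le N}\ell_Ie_I$ with
\begin{equation*}
  \E\Bigl[\int_0^T|Y_t-\boldsymbol\ell(\hbbW_t)|^p\dd t\Bigr]^{1/p}<\frac{\epsilon}{2}.
\end{equation*}
This is the main step, and its difficulty lies in checking that the hypotheses of \cite{Ceylan2025} apply. In particular, only continuity of $\mu,\sigma$ together with existence and uniqueness is assumed, not Lipschitz continuity. Consequently the measurable (rather than continuous) version of the $L^p$-theorem must be used, which the Lusin-based argument of Theorem~\ref{thm:Lpmain} supplies on the rough path space as well.

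\emph{Step 2 (discretisation).} With $\boldsymbol\ell$ now fixed, Lemma~\ref{lem: brownian signature}, specifically \eqref{eq: Lp rate}, gives
\begin{equation*}
  \E\Bigl[\int_0^T|\boldsymbol\ell(\hbbW_t)-\boldsymbol\ell(\hbbW^{\pi_n}_t)|^p\dd t\Bigr]^{1/p}\le T^{1/p}C_{d,N,p,T,\boldsymbol\ell}|\pi_n|^{1/2}.
\end{equation*}
Since $|\pi_n|\to0$, we can choose $n_0$ such that the right-hand side is less than $\epsilon/2$ for every $n\ge n_0$. This ordering is important: $\boldsymbol\ell$ is chosen before $n_0$, which is exactly the order of quantifiers in the statement, with one functional working uniformly for all $n\ge n_0$.

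\emph{Conclusion.} Minkowski's inequality in $L^p(\d t\otimes\P)$ combines the two bounds to give a total error strictly below $\epsilon$, and raising to the $p$-th power yields the claim.
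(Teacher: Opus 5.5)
Your proposal is correct and takes essentially the same approach as the paper: invoke the SDE approximation result of \cite{Ceylan2025} (Proposition~4.6 there) to approximate $Y$ by $\boldsymbol\ell(\hbbW_t)$, then fix $\boldsymbol\ell$, use Lemma~\ref{lem: brownian signature} to choose $n_0$, and combine the two errors via Minkowski's inequality. Your explicit $\epsilon/2$ budget in Step~1 is slightly cleaner than the paper's write-up, which bounds the first term only by $\epsilon$ before adding the $\epsilon/2$ discretisation error.
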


\begin{proof}
  By Proposition~4.6 in \cite{Ceylan2025} we know that we can approximate solutions to SDEs driven by Brownian motions by some linear functional of the Brownian signature, i.e., for every $\epsilon>0$ there exists some linear functional $\boldsymbol\ell$, such that
  \begin{equation*}
    \E\Bigl[\int_0^T|Y_t-\boldsymbol\ell(\hbbW_t)|^p\dd t\Bigr]^{\frac{1}{p}}<\epsilon.
  \end{equation*}
  Moreover, by Lemma~\ref{lem: brownian signature}, for this fixed $\boldsymbol\ell$ we have
  \begin{equation*}
    \E\Bigl[\int_0^T|\boldsymbol\ell(\hbbW_t)-\boldsymbol\ell(\hbbW^{\pi_n}_t)|^p\dd t\Bigr]^{1/p}\to 0,
    \qquad n\to\infty,
  \end{equation*}
  so there exists $n_0\in\N$ such that for all $n\ge n_0$,
  \begin{equation*}
    \E\Bigl[\int_0^T|\boldsymbol\ell(\hbbW_t)-\boldsymbol\ell(\hbbW^{\pi_n}_t)|^p\dd t\Bigr]^{1/p}<\epsilon/2.
  \end{equation*}
  Therefore, by Minkowski’s inequality for all $n\ge n_0$,
  \begin{equation*}
    \E\Bigl[\int_0^T|Y_t-\boldsymbol\ell(\hbbW^{\pi_n}_t)|^p\dd t\Bigr]^{1/p}
    \le \E\Bigl[\int_0^T|Y_t-\boldsymbol\ell(\hbbW_t)|^p\dd t\Bigr]^{1/p} +
    \E\Bigl[\int_0^T|\boldsymbol\ell(\hbbW_t)-\boldsymbol\ell(\hbbW^{\pi_n}_t)|^p\dd t\Bigr]^{1/p}
    <\epsilon,
  \end{equation*}
  which implies the claim.
\end{proof}

\bibliography{quellen}{}
\bibliographystyle{amsalpha}

\end{document}